\documentclass[11pt]{amsart}

\usepackage[utf8x]{inputenc}
\usepackage{amsmath}
\usepackage{amssymb}
\usepackage{amsthm}
\usepackage[foot]{amsaddr}
\usepackage[toc,page]{appendix}
\usepackage{enumerate}
\usepackage{fullpage}
\usepackage{graphicx}
\usepackage{lmodern}
\usepackage{mathtools}
\usepackage{multirow}
\usepackage[numbers]{natbib}
\usepackage{remreset}
\usepackage{tabularx}
\usepackage{subcaption}
\usepackage{color}

\usepackage{diagbox}

\theoremstyle{plain}
\newtheorem{theorem}{Theorem}[section]

\newtheorem{cor}{Corollary}[section]
\newtheorem{lemma}{Lemma}[section]
\numberwithin{equation}{section}


\theoremstyle{definition}
\newtheorem{mydef}{Definition}[section]

\newtheorem{rem}{Remark}
\newtheorem{numericalcase}{Case}
\newcommand{\algn}[1]{\begin{align} #1 \end{align}}
\newcommand{\algns}[1]{\begin{align*} #1 \end{align*}}
\newcommand{\algnd}[1]{\begin{aligned} #1 \end{aligned}}
\newcommand{\eqn}[1]{\begin{equation} #1 \end{equation}}
\newcommand{\eqns}[1]{\begin{equation*} #1 \end{equation*}}
\newcommand{\subeqns}[1]{\begin{subequations} #1 \end{subequations}}


\newcommand{\bs}[1]{\boldsymbol{#1}}
\newcommand{\setof}[1]{\left\{ #1 \right\} }	
\newcommand{\sothat}{\,:\,}
\newcommand{\dual}[1]{#1^*}
\newcommand{\adjoint}[1]{#1^*}
\DeclareRobustCommand{\rchi}{{\mathpalette\irchi\relax}}
\newcommand{\irchi}[2]{\raisebox{\depth}{$#1\chi$}} 
\newcommand{\tilrchi}{\tilde{\rchi}}

\renewcommand{\div}{\operatorname{div}}
\newcommand{\curl}{\boldsymbol{\operatorname{curl}}}
\newcommand{\vecdiv}{\boldsymbol{\operatorname{div}}}
\newcommand{\symgrad}[1]{\bs{\epsilon}\left( #1 \right)}
\newcommand{\symgradsymbol}{\bs{\epsilon}}
\newcommand{\skw}{\operatorname{skw}}
\newcommand{\inv}[1]{#1^{-1}}

\newcommand{\grad}{\operatorname{grad}}
\newcommand{\matgrad}{\boldsymbol{\operatorname{grad}}}
\newcommand{\intd}{\mathrm{d}}	

\newcommand{\compl}{\mathcal{C}}			
\newcommand{\pd}{\partial}
\newcommand{\dt}{\Delta t}		
\newcommand{\identity}{I}

\newcommand{\norm}[1]{\left\Vert#1\right\Vert}
\newcommand{\norw}[2]{\left\Vert#1\right\Vert_{#2}}
\newcommand{\inner}[2]{\left( #1, #2\right)}
\newcommand{\trinner}[2]{\left\langle #1, #2 \right\rangle}

\newcommand{\normal}{\hat{\nu}}				
\newcommand{\Vfield}{\mathbb{V}}
\newcommand{\tr}{\operatorname{tr}}
\newcommand{\bkappa}{\boldsymbol{\kappa}}
\newcommand{\bulk}{K}

\newcommand{\idmat}{\mathbb{I}}
\newcommand{\mats}{\mathbb{M}}
\newcommand{\syms}{\mathbb{S}}
\newcommand{\reals}{\mathbb{R}}		
\newcommand{\skews}{\mathbb{K}}		


\newcommand{\A}{A}
\newcommand{\CoeffMat}{\mathcal{A}}
\newcommand{\B}{\mathcal{B}}
\newcommand{\z}{\bs{z}}

\newcommand{\Coperator}{B}

\newcommand{\Bmat}{\mathbb{B}}
\newcommand{\Btmat}{\mathbb{B}_{t}}
\newcommand{\BOmat}{\mathbb{B}_{0}}
\newcommand{\PCmat}{\tilde{\mathbb{B}}}
\newcommand{\PCinv}{\tilde{\mathbb{D}}}
\newcommand{\Vmat}{\mathbb{V}_{\lambda}}
\newcommand{\Dmat}{\mathbb{D}}

\renewcommand{\u}{\bs{u}}
\newcommand{\sig}{\sigma}
\newcommand{\lagmult}{\gamma}
\newcommand{\p}{p}

\renewcommand{\v}{\bs{v}}
\newcommand{\vsig}{\tau}
\newcommand{\vlag}{\eta}
\newcommand{\vp}{q}


\newcommand{\lagspace}{L^2(\Omega;\skews)}

\newcommand{\V}{\bs{V}}
\newcommand{\Q}{Q}



\begin{document}
	\title[]{Weakly imposed symmetry and robust preconditioners  for Biot's consolidation model}\thanks{The research leading to these results has received funding the European Research Council under the European Union's Seventh Framework Programme (FP7/2007-2013) / ERC grant agreement no. 339643. The work of Kent-Andre~Mardal has also been supported by the Research Council of Norway through grant no. 209951 and a Center of Excellence grant awarded to the Center for Biomedical Computing at Simula Research Laboratory.}
	\author{Trygve B\ae rland$^\dagger$}
	\email{trygveba@math.uio.no}
	\address{$^\dagger$Department of Mathematics, University of Oslo, Blindern, Oslo, 0316 Norway}

	\author{Jeonghun J. Lee$^\ddagger$}
	\email{jeonghun@ices.utexas.edu}
	\address{$^\ddagger$The Institute for Computational and Engineering Sciences, University of Texas Austin, 201 East 24th St, Stop C0200
		POB 4.102, Austin, Texas 78712, USA}
	
	\author{Kent-Andre Mardal$^\dagger$}
	\email{kent-and@math.uio.no}
	
	\author{Ragnar Winther$^\dagger$}
	\email{rwinther@math.uio.no}
	
	\begin{abstract} 
			We discuss the construction of robust preconditioners for finite element approximations
			of Biot's consolidation model in poroelasticity. More precisely, we study finite element methods based on 
			generalizations of the Hellinger-Reissner principle of linear elasticity, where the stress tensor is 
			one of the unknowns. The Biot model has a number of applications in science, medicine, and engineering.
			A challenge in many of these applications is that the model parameters range over several orders of magnitude.
			Therefore, discretization procedures which are well behaved with respect to such variations are needed. 
			The focus of the present paper will be on the construction of preconditioners, such that the preconditioned discrete     
			systems are well-conditioned with respect to variations of the model parameters as well as  refinements of 
			the discretization. As a byproduct, we also obtain preconditioners for linear elasticity that are robust in the incompressible limit.
	\end{abstract}
	\maketitle

	\section{Introduction}
	The purpose of this paper is to  discuss a family of finite element methods  for Biot's consolidation model, with a focus on the construction of preconditioners for the discrete systems.
	The Biot model describes the deformation of an elastic porous medium saturated by a viscous fluid, leading to a  system which describes the coupling between the elastic behaviour of the medium and the fluid flow. 
	The finite element systems will therefore contain discrete versions 
	of linear elasticity and porous medium flow as proper subsystems. The methods studied here are based on 
	mixed finite element methods with weakly imposed symmetry for the elasticity part. In this respect, the methods presented 
	here are generalizations of the methods for linear elasticity discussed in \cite{arnold2007mixed}.
	
	With $\Omega$ being an open domain in $\reals^n$, the Biot model is a coupled system of partial differential equations of the form
	\eqn{
		\label{eq:biot_strong_original}
		\algnd{
			-\vecdiv\compl \symgrad{\u} + \alpha \grad \p &= f & &\text{ in } \Omega, \\
			s_0\dot{\p} + \alpha\div \dot{\u} - \div(\bkappa\grad \p) &= g & & \text{ in } \Omega,
		}
	}
	where the dots denote time derivation. The unknowns are the displacement of the structure $\u$, and the pore pressure $p$. The differential operator $\symgradsymbol$ is the symmetric gradient
	and $\compl$ is the stiffness tensor which describes the strain-stress relation. The parameters $s_0$ and $\alpha$ are the so-called constrained specific storage coefficient and the Biot-Willis constant, respectively. 
	Finally, $\bkappa$ is the hydraulic conductivity, determined by the permeability of the medium and the 
	fluid viscosity, while  $f$ and $g$ are given momentum- and mass sources, respectively.
	
	In this paper we will consider linear, isotropic elasticity, in which case the stiffness tensor is modelled as
	\begin{equation}
	\label{isotropic_stress_strain}
	\compl \symgrad{\u} = 2\mu \symgrad{\u} + \lambda\tr\symgrad{\u}\idmat \equiv  2\mu \symgrad{\u} + \lambda(\div\u)\idmat, 
	\end{equation}
	where $\mu, \lambda$ are the Lamé coefficients.
	We will allow the parameters $\mu$, $\lambda$, and $s_0$ to be spatially varying, scalar valued functions, $\bkappa$ is a symmetric positive definite matrix-valued function, while $\alpha \in (0,1]$ is constant. The well-posedness of system (\ref{eq:biot_strong_original}), with appropriate boundary and initial conditions,  is discussed in \citep{showalter2000diffusion}.
	
	The Biot system arises as a key model in many practical applications, such as in geoscience and in the modelling 
	of soft tissues of the central nervous system. For many of these 
	applications the variations of the parameters will be quite large. For example, in geophysical applications  the permeability may vary in the range from $10^{-9}$ to $10^{-21}$ $m^2$, \cite{coussy2004poromechanics,wang2000theory}, while the Lamé coefficient $\lambda$ can vary between $500$ and $10^{6}$ Pa in neurological applications \cite{smith2007interstitial,stoverud2016poro}.
	For a further discussion of relevant properties of the model parameters of the system \eqref{eq:biot_strong_original} we refer to \cite{lee2015parameter} and references  given there. 
	
	Due to the wide range of physical applications of the Biot model there is a need for numerical methods which behave robustly with respect to these  variations of the model parameters.
	A number of finite element methods for the Biot model have previously been proposed in the literature. These studies include various primal methods \cite{reed1984investigation,vermeer1981accuracy,zienkiewicz1984dynamic}, mixed methods \cite{berger2015stabilized,murad1992improved,murad1994stability,yi2014convergence}, and a discontinuous Galerkin method \cite{chen2013analysis}. Combinations of these methods have also been proposed,
	see for example  \cite{lee2016robust,phillips2007coupling,phillips2007coupling2,phillips2008coupling,yi2013coupling},
	while parameter-robust 
	preconditioners are discussed in \cite{axelsson2012stable,haga2012parallel,rhebergen2015threefield}. In fact, this was also the main 
	theme of the paper \cite{lee2015parameter}, where the discretization is based on a standard $H^1$ formulation 
	of the flow, combined with discretizing the elasticity part using stable mixed finite elements for the Stokes equation.
	A standard approach to obtain a locking free displacement method for linear elasticity,
	i.e., a method which behaves well for large Lamé parameters 
	$\lambda$,
	is to introduce ``solid pressure'' as an additional unknown. This approach leads to a three field formulation for 
	the Biot model, where the unknowns are the displacement of the medium and the two pressures.
	The discussion in \cite{lee2015parameter} shows that, in contrast to the situation for linear elasticity,  this approach may not lead to a robust 
	discretization of the Biot system. However, by introducing a new unknown, the so-called ``total pressure'', a robust discretization is obtained. In fact, robustness of the discretization both with respect to the model parameters $\lambda$, 
	$\bkappa$, and the discretization parameter $h$ are obtained. Furthermore,
	robust preconditioners are constructed, i.e., preconditioners that behave uniformly well with respect to 
	variations of the model parameters and refinements of the discretization.
	
	The present paper can be seen as a continuation of \cite{lee2015parameter}, where the discretization of the elasticity part of the system is based on the mixed methods proposed in \cite{arnold2007mixed}. 
	The mixed finite element methods studied in \cite{arnold2007mixed} are based on the Hellinger-Reissner
	variational principle of linear elasticity. An advantage of this approach is that robustness of the methods with respect to the  Lamé parameter $\lambda$ is more or less obtained automatically, and that the 
	stress tensor, which is of more interest in some applications,
	is computed directly. On the other hand, a difficulty of these methods
	is to construct stable finite element function spaces of exactly symmetric stress tensors. Therefore, the 
	methods proposed in \cite{arnold2007mixed}, based on weakly symmetric stresses, are employed.
	In the present paper we generalize these methods to the Biot model. This leads to a four-field formulation 
	where the unknowns are the stress tensor, the displacement of the structure, the pore pressure, and additionally a 
	Lagrange multiplier which results from the weakly imposed symmetry constraint. The main purpose of the present paper 
	is to discuss the properties of these finite element systems. In particular, as in \cite{lee2015parameter}, we will focus on the construction of robust preconditioners for the stationary systems obtained from a time discretization of the evolution problem \eqref{eq:biot_strong_original}.
	
	This
	paper is organized as follows. In Section \ref{sec:preliminaries} we establish the notation that will be used throughout the paper and we give a brief description of the main strategy on how to construct preconditioners that 
	are robust with respect to model parameters and mesh refinement. A proper weak formulation of 
	a semidiscrete version  of the Biot model, with four primary unknowns,  is  also stated  in Section \ref{sec:preliminaries}.
	Section \ref{sec:stability} is devoted to parameter-robust stability results for both the continuous 
	and discrete version of this problem, while more detailed discussions of the construction of the corresponding preconditioners are given in  Section \ref{sec:preconditioning}.
	Finally, in Section \ref{sec:numerical_experiments} we present a few numerical experiments aimed at validating the theoretical results, followed by some concluding remarks in Section \ref{sec:conclusions}.	
	
	\section{Preliminaries}
	\label{sec:preliminaries}
	We will denote by $\Omega$ a bounded domain in $\reals^n$, with $n = 2$ or $3$, and boundary $\pd \Omega$. The space of column $n$-vectors is written $\Vfield = \reals^n$, and $\mats$ will denote the space of $n \times n$ real matrices. Then, $\syms$ and $\skews$ are the subspaces of symmetric- and skew symmetric matrices, respectively.
	
	In the following, $H^k = H^k(\Omega)$ will denote the Sobolev spaces of functions on $\Omega$ with all derivates of order up to $k$ in $L^2(\Omega)$, and its norm is denoted by $\norw{\cdot}{k}$. In addition, $H^k_0$ will denote the closure of $C_0^\infty(\Omega)$ in $H^k$. If $\mathbb{X}$ is an inner product space, $L^2(\Omega;\mathbb{X})$ denotes the space of $\mathbb{X}$-valued, square integrable functions, and  its norm and inner product will be denoted by $\norw{\cdot}{0}$ and $\inner{\cdot}{\cdot}$, respectiely.
	
	Next, $H(\div,\Omega) = H(\div, \Omega; \Vfield)$ will denote the Sobolev space of vector fields on $\Omega$ in $L^2(\Omega; \Vfield)$ with divergence in $L^2(\Omega)$, and its norm is denoted by $\norw{\cdot}{\div} := \left(\norw{\cdot}{0}^2 + \norw{\div\cdot}{0}^2 \right)^{1/2}$. Similarly, $H(\vecdiv, \Omega; \mats)$ will be functions in $L^2(\Omega; \mats)$ with divergence in $L^2(\Omega;\Vfield)$, where the divergence is taken by rows.
	
	For a Hilbert space $X$, we denote its inner product by $\trinner{\cdot}{\cdot}_{X}$, except in the special case of $X = L^2(\Omega)$ already described, in which case $\inner{\cdot}{\cdot}$ is the inner product. If we let $\dual{X}$ denote a representation of the dual of $X$, the duality pairing between $X$ and $\dual{X}$ will be denoted by $\trinner{\cdot}{\cdot}$. We will in the context of Sobolev spaces choose the representation $\dual{X}$ so that the duality pairing is an extension of the $L^2$ inner product. If $Y$ denotes an additional Hilbert space, $\mathcal{L}(X,Y)$ denotes the space of bounded, linear operators from $X$ to $Y$. If $T \in \mathcal{L}(X,\dual{Y})$, we denote its adjoint by $\adjoint{T}$, which is an element of $\mathcal{L}(Y,\dual{X})$.

	\subsection{Abstract preconditioning of parameter dependent systems}
	\label{sec:abstract_preconditioning}
	To motivate the analysis below, we will briefly discuss an abstract framework for preconditioning 
	systems of partial differential equations and their discrete counterparts.
	For a more thorough discussion of this framework we  refer to \citep{lee2015parameter,mardal2011preconditioning}.
	
	Let $X$ be a real, separable Hilbert space.
	Suppose that $\CoeffMat \in \mathcal{L}(X,\dual{X})$ is a linear and bounded operator, which is invertible with bounded inverse. Assume further that $\CoeffMat$ is symmetric, i.e.
	\eqns{
	\trinner{\CoeffMat x}{y} = \trinner{x}{\CoeffMat y}, \quad \forall x,y \in X.	
	}
	We then consider the problem of finding $x\in X$ so that
	\eqn{
		\label{eq:abstract-Hilbert-space-problem}
		\CoeffMat x = f	
	}
	in $\dual{X}$ for a given $f \in \dual{X}$. 
	Applying a symmetric, positive definite operator $\B \in \mathcal{L}(\dual{X},X)$ to problem (\ref{eq:abstract-Hilbert-space-problem}) gives the preconditioned problem of finding $x \in X$ so that
	\eqns{
		\label{eq:abstract-preconditioned-problem}
		\B\CoeffMat x = \B f
	}
	in $X$. The convergence rate of a Krylov subspace method applied to the preconditioned problem is controlled by the condition number
	\eqns{
		K(\B\CoeffMat) = \norw{\B\CoeffMat}{\mathcal{L}(X,X)}\norw{(\B\CoeffMat)^{-1}}{\mathcal{L}(X,X)}
	}
	in the way that a large value of $K(\B\CoeffMat)$ will generally lead to slow convergence.

	We note that one possible choice of the operator $\B$ is the Riesz map from $\dual{X}$ to $X$, or in fact, any operator spectrally equivalent to it. For linear systems arising as discretizations of partial differential equations an effective preconditioner also have to 
	be  easy to evaluate, i.e., we require that the action of the operator can be evaluated cheaply. For systems of partial differential equations this point of view naturally leads to block diagonal preconditioners, where the blocks correspond to preconditioners 
	of  simpler and more canonical operators. For example, 
in the case of operators corresponding to stable discretizations of the inner products of Sobolev spaces like $X= H^1$, $X=H(\curl)$, and $X= H(\div)$,
efficient algorithms that are spectrally equivalent to the Riesz map from $\dual{X}$ to $X$   can be constructed with multilevel algorithms, 
cf. e.g., \citep{arnold1997preconditioning,bramble1993multigrid,hiptmair2007nodal}.
	
	Preconditioning of parameter depedent problems follows in a similar manner. Let $\CoeffMat_\epsilon$ denote an operator depending on some collection of parameters $\epsilon$. To construct a preconditioner for $\CoeffMat_\epsilon$ we determine an $\epsilon$-dependent Hilbert space, $X_\epsilon$, such that $\CoeffMat_\epsilon$ is a linear, symmetric map from $X_\epsilon$ to $\dual{X_\epsilon}$. Furthermore, the corresponding operator norms $\norw{\inv{\CoeffMat_\epsilon}}{\mathcal{L}(\dual{X_\epsilon},X_\epsilon)}$ and $\norw{\CoeffMat_\epsilon}{\mathcal{L}(X_\epsilon, \dual{X_\epsilon})}$ should be bounded independently of $\epsilon$. Having determined $X_\epsilon$, a suitable preconditioner is then a symmetric, positive definite operator $\B_\epsilon$ from $\dual{X_\epsilon}$ to $X_\epsilon$, where the  
	operator norms of $\norw{\B_\epsilon}{\mathcal{L}(\dual{X_\epsilon},X_\epsilon)}$ and  $\norw{\inv{\B_\epsilon}}{\mathcal{L}(X_\epsilon,\dual{X_\epsilon})}$are 
bounded independently of $\epsilon$. We are then guaranteed that the condition number $K(\B_\epsilon\CoeffMat_\epsilon)$ is bounded independently of $\epsilon$, and as a consequence the performance of a Krylov subspace method will 
	basically $\epsilon$ independent.
	
	\subsection{Variational formulation}
	\label{sec:model_formulation}
	
An implicit time discretization of the system (\ref{eq:biot_strong_original}), with time step $\dt$, will typically  lead to a stationary system of the form
	\eqn{
	\label{eq:biot-strong-model-formulation}
		\algnd{
		-\vecdiv\compl \symgrad{\u} + \alpha \grad \p &= f & &\text{ in } \Omega, \\
		s_0\p + \alpha\div \u - \dt\div(\bkappa\grad \p) &= g & & \text{ in } \Omega.
		}
	}
	Here  $g$ encapsulates information about both the mass source and previous time steps. Furthermore, $\dt \bkappa$ can be regarded as one single parameter, which carries information about both the time discretization and the conducivity. Therefore, $\dt$ is set equal to one in the discussion below, while the matrix valued function $\bkappa$ is assumed to be symmetric positive definite, but can be arbitrarily small. For parameter ranges of practical problems, it is typical that $\alpha>0$ is of order 1,
	\algns{ 
	1 \ll \mu \lesssim \lambda \le + \infty,
	}
	and $\mu \ll \lambda$ holds if the elastic matrix is nearly incompressible, i.e. if $\lambda$ is large.
	For the rest of the paper we shall adhere to the following parameter ranges, which are slighty more general than the ranges assumed in \citep{lee2015parameter},
	\eqn{
	\label{eq:parameter-ranges}
	0 < \lambda < +\infty, \quad 0 < \alpha \leq 1, \quad 0 < \bkappa \leq 1.
	}
	Furthermore, the first Lamé coefficient $\mu$ is assumed to be of order $1$. This assumption can be justified by rescaling the equations in \eqref{eq:biot-strong-model-formulation} as well as the parameters $\lambda$, $\alpha$, and $\kappa$ with a constant of order $\mu$, as was done in \citep{lee2015parameter}. In contrast to the discussion presented in \citep{lee2015parameter}, some unknowns ($\sig$ and $\lagmult$, which will be defined below) are also rescaled in this paper. In particular, the variable $\sigma$ is a scaled version of the stress tensor.

	The condition on $\bkappa$ given in (\ref{eq:parameter-ranges}) means that the pointwise eigenvalues of $\bkappa$ are uniformly bounded below by $0$ and above by $1$. 
	The constrained specific storage coefficient is assumed to satisfy the relation $s_0 = \frac{\alpha^2}{\lambda}$. This assumption is mostly for sake of brevity, and the following analysis will work even if $s_0$ is only bounded from below by a constant times $\frac{\alpha^2}{\lambda}$. We refer to \citep{lee2015parameter} for a more detailed discussion of scaling of the Biot system. 
	
	For (\ref{eq:biot-strong-model-formulation}) to be well-posed, it needs to be augmented with a set of boundary conditions. To that end we introduce two separate partitions of the boundary,  $\pd\Omega = \Gamma_\p \cup \Gamma_f = \Gamma_d \cup \Gamma_t$, where $\Gamma_\p$ and $\Gamma_d$ should have positive meaure, i.e., $|\Gamma_p|,|\Gamma_d| > 0$. General boundary conditions can then be posed as
	\eqns{
		\label{eq:boundary_conditions}
		\algnd{
			\p(t) &= \p_0(t) & &\text{ on } \Gamma_\p, \\
			(\bkappa\grad \p(t))\cdot \normal&= \z_{\normal}(t) & &\text{ on } \Gamma_f , \\
			\u(t) &= \u_0(t) & & \text{ on } \Gamma_d , \\
			\sig(t)\normal &= (\compl\symgrad{\u}-\alpha p\idmat)\normal = \sig_{\normal}(t) & &\text{ on } \Gamma_t.
		}
	}
	For simplicity, we will in this paper only consider homogeneous boundary conditions. That is, $\p_0,\z_{\normal}, \u_0, \sig_{\normal} = 0$.
	
	For the weak formulation we introduce a new unknown, the stress tensor, defined as 
	\eqn{
		\label{eq:stress-definition}
		\sig := \compl \symgrad{\u} - \alpha \p\idmat,	
	}
	and we denote the inverse of the stiffness tensor by $\A = \A_{\mu,\lambda} := \inv{\compl}$, which is an operator acting on $\syms$.
	With the stiffness tensor given by (\ref{isotropic_stress_strain}) we obtain
	\begin{equation}
	\label{eq:A-definition}
	\A \sigma = \frac{1}{2\mu}\left(\sigma - \frac{\lambda}{2\mu + n\lambda}\tr(\sigma)\idmat \right).
	\end{equation}
	Furthermore, we note that the trace of (\ref{eq:A-definition}) is given by
	\eqn{
	\label{eq:trace-A}
		\tr\A\sig = \frac{1}{2\mu + n\lambda}\tr\sig.	
	}
	By using (\ref{eq:stress-definition}) and (\ref{eq:trace-A}) we can express the term
	 $\alpha\div\u$ in the second equation of (\ref{eq:biot-strong-model-formulation}) as a function of $\sig$ and $\p$  as
	\algn{
		\label{eq:alpha-divu-relation}
		\alpha \div \u &= \alpha\tr \A (\sig + \alpha\p\idmat) 
		= \bulk\sig + \frac{n\alpha^2}{2\mu+n\lambda}p,
	}
	where $\bulk= \bulk_{\alpha,\mu,\lambda}: \mats \to \reals$ is the operator defined pointwise by
	\eqn{
		\label{eq:bulk-definition}
		K\,\vsig := \frac{\alpha}{2\mu + n\lambda}\tr\vsig.	
	}
	 After introducing $\sig$ defined by (\ref{eq:stress-definition}), and using (\ref{eq:alpha-divu-relation}),
	(\ref{eq:biot-strong-model-formulation}) becomes
	\eqns{
	\algnd{
		\A\sig + \adjoint{\bulk} \p - \symgrad{\u} &= 0 & & \text{ in } \Omega , \\
		\bulk\sig+ \Coperator \p - \div(\bkappa\grad\p) &=g & & \text{ in } \Omega ,\\
		-\vecdiv\sig &= f & & \text{ in } \Omega.
	}
	}
	Here, $\adjoint{\bulk}$ denotes the operator $p \mapsto \frac{\alpha}{2 \mu + n \lambda} p\idmat$, while 
	$\Coperator = \Coperator_{\alpha,\mu,\lambda}$ is the operator defined by
	\eqn{
	\label{eq:Coperator-def}
	\Coperator \p := \left(s_0 + \frac{n\alpha^2}{2\mu+n\lambda}\right)\p \equiv \frac{\alpha^2}{\lambda}\left(1 + \frac{n\lambda}{2\mu +n\lambda}\right)p.	
	} 
	To complete the formulation, we enforce the symmetry of the stress tensor in a weak manner, i.e.,  $\sigma$ is now $\mats$-valued, instead of $\syms$, and we require that
	\begin{equation*}
	\inner{\sig}{\eta} = 0 \quad \forall \eta \in L^2(\Omega; \skews).
	\end{equation*}
	The trade off is that we need to introduce a Lagrange multiplier, $\lagmult$, which will also play the role of the skew symmetric part of $\matgrad \u$. This relaxation of the symmetry on $\sig$ also requires us to extend the definition of $\A$ from $\syms$ to all tensors $\mats$. We denote this extension by $\A$ as well, since it will also be given by formula (\ref{eq:A-definition}).
	
	The system now reads
	\eqn{
		\label{total_system_strong}
		\algnd{
			\A\sig + \adjoint{\bulk} p - \matgrad \u + \lagmult &= 0 & & \text{ in } \Omega, \\
			\bulk\sig + \Coperator \p - \div(\bkappa\grad\p) &=g & & \text{ in } \Omega, \\
			-\vecdiv\sig &= f & & \text{ in } \Omega, \\
			\inner{\sig}{\eta} &= 0& & \forall \eta \in L^2(\Omega; \skews).
		}
	}
	
	Defining the function spaces
	\eqn{
		\label{eq:function_spaces}
		\algnd{
			\Sigma &= \setof{\vsig \in H(\vecdiv,\Omega;\mats) \sothat \vsig\cdot\normal|_{\Gamma_t} = 0}, \\
			\Q &= \setof{\vp \in H^1(\Omega) \sothat \vp|_{\Gamma_p} = 0}, \\
			\V &= L^2(\Omega;\Vfield), \\
			\Gamma &= L^2(\Omega;\skews),
		}
	}
	an appriopriate weak formulation of (\ref{total_system_strong}) is:

	Find $(\sig,\p, \u,\lagmult)\in \Sigma \times \Q \times \V \times \Gamma$ so that
	\eqn{
		\label{hdiv_weaksys}
		\algnd{
			\inner{\A\sig}{\vsig} + \inner{\p}{\bulk\vsig} + \inner{\u}{\vecdiv \vsig} + \inner{\lagmult}{\vsig} &= 0 & & \forall\vsig \in \Sigma, \\
			\inner{\bulk\sig}{\vp } + \inner{\Coperator \p}{\vp} + \inner{\bkappa\grad\p}{\grad\vp} &= \inner{g}{\vp} & &\forall \vp\in \Q, \\
			\inner{\vecdiv \sig}{\v} &= -\inner{f}{\v} & & \forall \v \in \V, \\
			\inner{\sig}{\vlag} &= 0 & & \forall \eta \in \Gamma.
		}
	}
In matrix-vector form, the system (\ref{hdiv_weaksys}) reads
	\eqn{
		\label{eq:coeffmat-tot-system}
		\CoeffMat
		\begin{pmatrix}
			\sig \\
			\p \\
			\u \\
			\lagmult \\
		\end{pmatrix}
		:=
		\begin{pmatrix}
			\A & \adjoint{\bulk} & -\matgrad & \adjoint{\skw} \\
			\bulk & \Coperator-\div(\bkappa\grad) & 0 & 0 \\
			\vecdiv & 0 & 0 & 0 \\
			\skw & 0 & 0 & 0 \\
		\end{pmatrix}
		\begin{pmatrix}
			\sig \\
			\p \\
			\u \\
			\lagmult \\
		\end{pmatrix}
		=
		\begin{pmatrix}
			0 \\
			g \\
			-f \\
			0 \\
		\end{pmatrix},	
	}
	where $\skw: \mats \to \skews$ is the operator returning the skew-symmetric part of a tensor, in which case $\adjoint{\skw}: \skews \to \mats$ is simply the inclusion operator. From (\ref{eq:coeffmat-tot-system}) we see that the system exhibits a saddle point structure, and so well-posedness is ensured if the provided function spaces satisfies the stability conditions in Brezzi's theory of mixed methods (cf. \citep{boffi2013mixed}).
	We introduce the inner products
	\eqn{
		\label{eq:system-innerprods}
		\algnd{
			\trinner{\sig}{\vsig}_{\Sigma} &= \inner{\frac{1}{2\mu}\sig}{\vsig} + \inner{\vecdiv\sig}{\vecdiv \vsig} & & \forall \sig,\vsig \in \Sigma, \\
			\trinner{\p}{\vp}_{\Q} &= \inner{\Coperator \p}{\vp} + \inner{\bkappa\grad\p}{\grad\vp} & & \forall \p,\vp \in \Q, \\
			\trinner{\u}{\v}_{\V} &= \inner{\u}{\v} & & \forall \u,\v \in \V, \\
			\trinner{\lagmult}{\vlag}_{\Gamma} &= \inner{\lagmult}{\vlag} & & \forall \lagmult, \vlag \in \Gamma,
		}	
	}
	and define $\rchi := \Sigma \times \Q \times \V \times \Gamma$ with inner products inherited from (\ref{eq:system-innerprods}).
	With this notation the left hand side of \eqref{hdiv_weaksys} can alternatively be written  as 
	$\trinner{\CoeffMat(\sig,\p, \u, \lagmult)}{(\vsig,\vp, \v, \vlag)}$, where the
	operator $\CoeffMat: \rchi \to \dual{\rchi}$ will be bounded. In fact, in the case when $|\Gamma_t| >0$
	the operator $\CoeffMat$ will be bounded independently 
	of $\alpha$, $\lambda$, and $\bkappa$, and 
	to establish this uniform bound will be a main topic of the next section. However, in the clamped case, i.e., the case when 
	$|\Gamma_t| =0$, we need to alter the norm of the space $\Sigma$ to obtain a corresponding uniform bound.
	This discussion will also be given in the next section.
	
	 We end this section with the following remark. 

	\begin{rem}
		As already noted, the coefficient matrix form in (\ref{eq:coeffmat-tot-system}) exposes the saddle point structure of the system. However, worth noting is that a simple rearrangement of the terms leads to the system
	\eqn{
		\label{eq:coeffmat-alt}
		\CoeffMat
		\begin{pmatrix}
			\sig \\
			\u \\
			\lagmult \\
			\p \\
		\end{pmatrix}
		:=
		\begin{pmatrix}
			\A & -\matgrad & \adjoint{\skw} & \adjoint{\bulk} \\
			\vecdiv & 0 & 0 & 0 \\
			\skw & 0 & 0 & 0 \\
			\bulk & 0 & 0 & \Coperator -\div(\bkappa\grad)\\
		\end{pmatrix}
		\begin{pmatrix}
			\sig \\
			\u \\
			\lagmult \\
			\p \\
		\end{pmatrix}.	
	}
	From this we can consider the system as a coupling between a mixed formulation of linear elasticity with weakly imposed symmetry in the unknown $(\sig, \u, \lagmult)$, and a reaction-diffusion equation in the pore pressure $\p$. We will see that this observation will bear out stable finite element discretizations of this system.
	\end{rem}
	
	\section{Parameter robust stability}
	\label{sec:stability}
	The purpose of this section is to establish stability bounds for the system \eqref{hdiv_weaksys} or equivalently
	\eqref{eq:coeffmat-tot-system}. Note that this system depends on the parameters $\alpha, \mu, \lambda$ implicitly through the definition of the operators $A$, $B$, and $K$, and explicitly of the hydraulic conductivity $\kappa$.
	However, our goal is to establish stability bounds where the stability constant is independent of these parameters, as long as 
	they vary as specified in the beginning of Section \ref{sec:model_formulation}. On the other hand, we will allow the norms to depend on these parameters. More precisely, for the case $|\Gamma_t|>0$ we will use the norms given by the inner products specified in \eqref{eq:system-innerprods}, while the inner product of the space $\Sigma$ has to be altered slightly in the clamped case, i.e., when $|\Gamma_t| = 0$. As we will see in the next section this perturbation will also have an effect on the construction of 
	robust preconditioners.


	\subsection{The continuous case}
	\label{sec:continuous-stability}
	We will first consider the case when $|\Gamma_t| >0$. We introduce the two projections in $L^2(\Omega; \mats)$
	\eqns{
		\label{tensor_projections}
		P_0\tau := \tau - \frac{1}{n}\left(\frac{1}{|\Omega|}\int_{\Omega}\tr \tau \intd x\right) \idmat, \quad P_D\tau := \tau - \frac{1}{n}\tr \tau \idmat.
	}
	That is, $P_0$ projects $\tau$ to its mean trace-free part, whereas $P_D$ projects $\tau$ to its pointwise trace-free part.
	It then follows by algebraic considerations that
	\eqn{
		\label{p0pd_identity}
		P_0P_D = P_DP_0 = P_D. 
	}
	It is worthwhile to note that since $\vecdiv P_0 = \vecdiv$ on $\Sigma$, $P_0$ is also an orthogonal projection on $\Sigma$, not only on $L^2(\Omega;\mats)$.
	An algebraic manipulation gives 
	\eqns{
		\algnd{
		(\A \vsig, \vsig) &= \left( \frac{1}{2 \mu} \vsig, \vsig \right) - \left( \frac{\lambda}{2 \mu (2 \mu + n \lambda)} \tr \vsig, \tr \vsig \right) \\
		& =\left( \frac{1}{2\mu} P_D \vsig, P_D \vsig \right) + \left( \frac{1}{2\mu + n \lambda} (I-P_D) \vsig, (I-P_D) \vsig \right)
		}
	}
	where the second equality follows from $\vsig = P_D \vsig + (I-P_D) \vsig$ and the pointwise orthogonality of $P_D \vsig$ and $(I-P_D) \vsig = \frac{1}{n} \tr \vsig \idmat$. 
	From this a two-side bound of $(\A \vsig, \vsig)$ 
	\eqn{
	\label{A_two_bound}
	\left( \frac{1}{2\mu} P_D \vsig, P_D \vsig \right) \le (\A \vsig, \vsig) \le \left( \frac{1}{2\mu} \vsig, \vsig \right) 
	}  
	follows.
	We will use the following bound 
	\eqn{
		\label{eq:trace-estimate-nonclamped}
		\left( \frac{1}{2\mu} \vsig, \vsig \right)  \leq C\left( \inner{\frac{1}{2\mu}P_D\vsig}{P_D\vsig} + \norw{\vecdiv\vsig}{0}^2\right)	, \quad 
		\vsig \in \Sigma,
	}
	where the constant $C$ is independent of $\vsig$ and $\lambda$. This bound leads to stability of linear elasticity 
	in the incompressible limit, i.e., when $\lambda = +\infty$, and was used already in \cite{arnold1984family}  to obtain robust stability of mixed finite element methods for such problems. This bound will also be crucial for the construction 
	of robust preconditioners for the Biot model, and therefore we will revisit this inequality in the next section. However, as a consequence of the bound \eqref{eq:trace-estimate-nonclamped}, we observe that the following equivalence follows.

	\begin{lemma}
		\label{lem:trace_estimate}
		Assume $\Sigma$ is given by the first definition in (\ref{eq:function_spaces}) with $|\Gamma_t| > 0$. There is a constant  $C > 0$ such  that
		\eqn{
			\label{eq:hdiv-specequiv-nonclamped}
			\inner{\A\vsig}{\vsig} + \norw{\vecdiv\vsig}{0}^2
			\leq \trinner{\vsig}{\vsig}_{\Sigma} \leq C(\inner{\A\vsig}{\vsig} + \norw{\vecdiv\vsig}{0}^2)	
		}
		for every $\vsig \in \Sigma$. In particular, the constant $C$ is independent of $\lambda$.
	\end{lemma}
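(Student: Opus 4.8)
The plan is to extract both inequalities directly from the two bounds already recorded in the excerpt: the two-sided estimate \eqref{A_two_bound} for $(\A\vsig,\vsig)$ and the trace estimate \eqref{eq:trace-estimate-nonclamped}. No new machinery is needed beyond these, and the hypothesis $|\Gamma_t|>0$ enters only through \eqref{eq:trace-estimate-nonclamped}.

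For the lower bound in \eqref{eq:hdiv-specequiv-nonclamped}, I would simply invoke the right inequality in \eqref{A_two_bound}, which gives $(\A\vsig,\vsig) \le \inner{\frac{1}{2\mu}\vsig}{\vsig}$. Adding $\norw{\vecdiv\vsig}{0}^2$ to both sides yields $(\A\vsig,\vsig) + \norw{\vecdiv\vsig}{0}^2 \le \trinner{\vsig}{\vsig}_{\Sigma}$, which is exactly the claimed left inequality, with constant $1$ and independent of all the parameters.

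For the upper bound it suffices to control $\inner{\frac{1}{2\mu}\vsig}{\vsig}$ by $(\A\vsig,\vsig) + \norw{\vecdiv\vsig}{0}^2$, since the term $\norw{\vecdiv\vsig}{0}^2$ already appears on both sides of \eqref{eq:hdiv-specequiv-nonclamped}. Here I would chain the two ingredients: first \eqref{eq:trace-estimate-nonclamped} bounds $\inner{\frac{1}{2\mu}\vsig}{\vsig}$ by $C\bigl(\inner{\frac{1}{2\mu}P_D\vsig}{P_D\vsig} + \norw{\vecdiv\vsig}{0}^2\bigr)$ with $C$ independent of $\vsig$ and $\lambda$; then the left inequality in \eqref{A_two_bound} replaces $\inner{\frac{1}{2\mu}P_D\vsig}{P_D\vsig}$ by $(\A\vsig,\vsig)$ from above. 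Combining these and then adding $\norw{\vecdiv\vsig}{0}^2$ gives $\trinner{\vsig}{\vsig}_{\Sigma} \le (C+1)\bigl((\A\vsig,\vsig) + \norw{\vecdiv\vsig}{0}^2\bigr)$, which is the claimed right inequality, with a $\lambda$-independent constant.

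The only genuinely nontrivial input is the trace estimate \eqref{eq:trace-estimate-nonclamped}, whose validity relies on $|\Gamma_t|>0$ (so that $\vsig\cdot\normal$ vanishes on a set of positive measure) together with a standard Ne\v{c}as/Poincar\'e-type argument controlling the full $L^2$ norm of $\vsig$ by that of its pointwise trace-free part plus its divergence, uniformly in $\lambda$. Since the excerpt announces that this bound is established in the next section, I would treat it as given here. Thus the main obstacle for the lemma as stated is merely bookkeeping the constants and confirming $\lambda$-independence, both of which are immediate from the two cited bounds.
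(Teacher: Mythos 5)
Your proposal is correct and follows essentially the same route as the paper: the lower bound comes from the right inequality of \eqref{A_two_bound}, and the upper bound chains \eqref{eq:trace-estimate-nonclamped} with the left inequality of \eqref{A_two_bound}, exactly as in the paper's proof. Treating \eqref{eq:trace-estimate-nonclamped} as given is also consistent with the paper, which defers its proof (via the right inverse of the divergence) to the preconditioning section.
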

	\begin{proof}
	The first inequality of \eqref{eq:hdiv-specequiv-nonclamped} follows immediately from \eqref{A_two_bound}.
	The second inequality follows as
	\[
		\trinner{\vsig}{\vsig}_{\Sigma} \leq C\left(			\inner{\frac{1}{2\mu}P_D\vsig}{P_D\vsig} + \norw{\vecdiv\vsig}{0}^2\right) \leq C(\inner{\A\vsig}{\vsig} + \norw{\vecdiv\vsig}{0}^2),
	\] 
	using \eqref{eq:trace-estimate-nonclamped} and \eqref{A_two_bound}.
	\end{proof}

	In the case of $|\Gamma_t| = 0$, the constant matrix field $\vsig = \idmat$ is an element of $\Sigma$, and  $\inner{\A\vsig}{\vsig} + \norw{\vecdiv\vsig}{0}^2 \to  0$ as $\lambda \to + \infty$. 
	Therefore, we cannot hope to extend the $\lambda$-robust equivalence of Lemma \ref{lem:trace_estimate} to the case  $\Gamma_d = \pd \Omega$. In fact, $\vsig = c\idmat$, for any nonzero $c\in\reals$ is the only case that the equivalence fails. Excluding the span of $\{\idmat\}$ from $\Sigma$, we can still have a bound similar to \eqref{eq:hdiv-specequiv-nonclamped} as 
	\eqn{
		\label{eq:trace_estimate_clamped}
		\inner{\frac{1}{2\mu}P_0 \vsig}{P_0 \vsig} \leq C\left( \inner{\frac{1}{2\mu}P_D\vsig}{P_D\vsig} + \norw{\vecdiv\vsig}{0}^2\right),	\quad \vsig \in \Sigma,
	}
which is also proved in \cite{arnold1984family}. 
We can use \eqref{eq:trace_estimate_clamped} to establish that the operator $\A - \matgrad\vecdiv$ is spectrally equivalent to the $\mu$-scaled $H(\vecdiv)$ inner product over $P_0(\Sigma)$, i.e., the subspace of $\Sigma$ consisting of matrix fields with zero mean trace. On the other hand, for $\vsig \in (I-P_0)(\Sigma)$, $\vsig$ is a constant multiple of identity matrix field, so 
	\eqns{
		(\A \vsig, \vsig) = \inner{\frac{1}{2\mu+n\lambda}(I-P_0)\vsig}{(I-P_0)\vsig}.	
	} 
	This gives a motivation to define an auxiliary inner product $\trinner{\cdot}{\cdot}_{\tilde{\Sigma}}$ on $\Sigma$ as
	\eqn{
		\label{eq:auxiliary-stress-innerprod-def}
		\trinner{\sig}{\vsig}_{\tilde{\Sigma}} := \inner{\frac{1}{2\mu}P_0\sig}{P_0\vsig} + \inner{\frac{1}{2\mu+n\lambda}(I-P_0)\sig}{(I-P_0)\vsig} + \inner{\vecdiv\sig}{\vecdiv\vsig}, \quad \sig,\vsig \in \Sigma.
	}
The following lemma states that this inner product is spectrally equivalent to the inner product derived from $\A - \matgrad\vecdiv$.
	\begin{lemma}
		\label{lem:spectral-equivalence-clamped}
		Assume $|\Gamma_t| = 0$. 
		There exists a positive constant $C$ such  that
		\eqn{
			\label{eq:spectral-equivalence-clamped}
			\inv{C}(\inner{\A\vsig}{\vsig} + \norw{\vecdiv\vsig}{0}^2)\leq \trinner{\vsig}{\vsig}_{\tilde{\Sigma}} \leq C (\inner{\A\vsig}{\vsig} + \norw{\vecdiv\vsig}{0}^2).	
		} In particular, the constant $C$ is independent of $\lambda$.
	\end{lemma}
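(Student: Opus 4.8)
My plan is to reduce the statement to an elementary comparison of two pairs of ``spherical'' blocks, using the projection identities in \eqref{p0pd_identity}. From $P_0P_D = P_DP_0 = P_D$ one obtains the operator relations $P_D = P_DP_0$, $(I-P_0) = (I-P_D)(I-P_0)$, and $(I-P_D)P_0 = P_0-P_D$, hence the splitting $(I-P_D) = (P_0-P_D) + (I-P_0)$ of operators on $\Sigma$. Combining the algebraic identity for $(\A\vsig,\vsig)$ established just above \eqref{A_two_bound} with $\vecdiv P_0 = \vecdiv$, we may write
\[ \inner{\A\vsig}{\vsig}+\norw{\vecdiv\vsig}{0}^2 = \inner{\frac{1}{2\mu}P_D\vsig}{P_D\vsig} + \inner{\frac{1}{2\mu+n\lambda}(I-P_D)\vsig}{(I-P_D)\vsig} + \norw{\vecdiv\vsig}{0}^2, \]
while $\trinner{\vsig}{\vsig}_{\tilde{\Sigma}}$ is, by its definition \eqref{eq:auxiliary-stress-innerprod-def}, the very same expression with $P_D$ replaced by $P_0$ in the first two terms. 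So it suffices to compare the $P_D$-terms with the $P_0$-terms, with $\lambda$-independent constants. Two elementary facts will be used repeatedly: $\frac{1}{2\mu+n\lambda}\le\frac{1}{2\mu}$ pointwise, and $\inner{\frac{1}{2\mu}Q\vsig}{Q\vsig}\le\inner{\frac{1}{2\mu}\vsig}{\vsig}$ whenever $Q$ is one of the pointwise orthogonal projections $P_D$ or $I-P_D$ (applied to an arbitrary field).

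For the lower bound $\inner{\A\vsig}{\vsig}+\norw{\vecdiv\vsig}{0}^2\lesssim\trinner{\vsig}{\vsig}_{\tilde{\Sigma}}$ I will not need \eqref{eq:trace_estimate_clamped}. From $P_D\vsig = P_D(P_0\vsig)$ and the projection inequality, $\inner{\frac{1}{2\mu}P_D\vsig}{P_D\vsig}\le\inner{\frac{1}{2\mu}P_0\vsig}{P_0\vsig}$; and writing $(I-P_D)\vsig = (I-P_D)(P_0\vsig)+(I-P_0)\vsig$ and estimating the first summand by $\inner{\frac{1}{2\mu}P_0\vsig}{P_0\vsig}$ (weight domination together with the projection inequality for $I-P_D$) gives $\inner{\frac{1}{2\mu+n\lambda}(I-P_D)\vsig}{(I-P_D)\vsig}\lesssim \inner{\frac{1}{2\mu}P_0\vsig}{P_0\vsig}+\inner{\frac{1}{2\mu+n\lambda}(I-P_0)\vsig}{(I-P_0)\vsig}$. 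Adding the three contributions yields the bound with an absolute constant.

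The substance is in the upper bound $\trinner{\vsig}{\vsig}_{\tilde{\Sigma}}\lesssim\inner{\A\vsig}{\vsig}+\norw{\vecdiv\vsig}{0}^2$. First, \eqref{eq:trace_estimate_clamped} followed by \eqref{A_two_bound} yields $\inner{\frac{1}{2\mu}P_0\vsig}{P_0\vsig}\le C\big(\inner{\frac{1}{2\mu}P_D\vsig}{P_D\vsig}+\norw{\vecdiv\vsig}{0}^2\big)\le C\big(\inner{\A\vsig}{\vsig}+\norw{\vecdiv\vsig}{0}^2\big)$, which controls the first term of $\trinner{\vsig}{\vsig}_{\tilde{\Sigma}}$. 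For the constant-spherical term I use $(I-P_0)\vsig = (I-P_D)\vsig-(P_0-P_D)\vsig$ to bound $\inner{\frac{1}{2\mu+n\lambda}(I-P_0)\vsig}{(I-P_0)\vsig}$ by twice $\inner{\frac{1}{2\mu+n\lambda}(I-P_D)\vsig}{(I-P_D)\vsig}$, which is at most $2\inner{\A\vsig}{\vsig}$ by the displayed identity above, plus twice $\inner{\frac{1}{2\mu+n\lambda}(P_0-P_D)\vsig}{(P_0-P_D)\vsig}$; the latter, via $(P_0-P_D)\vsig = (I-P_D)(P_0\vsig)$, weight domination and the projection inequality for $I-P_D$, is at most $2\inner{\frac{1}{2\mu}P_0\vsig}{P_0\vsig}$, already controlled. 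Summing gives the claim, with $C$ depending only on the constant in \eqref{eq:trace_estimate_clamped}, hence independent of $\lambda$.

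I expect the only genuinely delicate point to be exactly this last comparison of the constant-spherical blocks. Since the weight $\frac{1}{2\mu+n\lambda}$ degenerates as $\lambda\to\infty$, one cannot pass between $(I-P_0)\vsig$ and $(I-P_D)\vsig$ by an averaging estimate carried out \emph{inside} the degenerate weighted norm without losing $\lambda$-uniformity. The maneuver above circumvents this because the defect $(I-P_D)\vsig-(I-P_0)\vsig = (P_0-P_D)\vsig = (I-P_D)(P_0\vsig)$ is measured by the non-degenerate $\frac{1}{2\mu}$-weighted norm of $P_0\vsig$, which \eqref{eq:trace_estimate_clamped} bounds $\lambda$-uniformly by $\inner{\A\vsig}{\vsig}+\norw{\vecdiv\vsig}{0}^2$; no interaction with the degenerate weight survives.
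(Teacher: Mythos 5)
Your proof is correct and rests on the same two ingredients as the paper's: the pointwise $P_D$-identity for $\inner{\A\vsig}{\vsig}$ together with \eqref{A_two_bound}, and the trace estimate \eqref{eq:trace_estimate_clamped}. The only difference is organizational: the paper first reduces to the orthogonal subspaces $P_0(\Sigma)$ and $(I-P_0)(\Sigma)$, on which the comparison is respectively a direct consequence of \eqref{eq:trace_estimate_clamped} and an exact identity, whereas you keep a general $\vsig$ and absorb the cross term $(P_0-P_D)\vsig$ via Young's inequality, weight domination, and the pointwise projection bound --- which works equally well and yields the same $\lambda$-independent constant.
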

	\begin{proof}
		Since $P_0$ is an orthogonal projection on $\Sigma$, in both inner products, it is sufficient to consider $\vsig \in P_0(\Sigma)$ and $\vsig \in (I-P_0)(\Sigma)$ separately.
		
		If $\vsig \in (I-P_0)(\Sigma)$, then $P_0 \vsig= 0$ and $\vsig$ is a constant multiple of the identity matrix field, so
		\eqns{
			\trinner{\vsig}{\vsig}_{\tilde{\Sigma}} = \inner{\A\vsig}{\vsig} + \norw{\vecdiv\vsig}{0}^2,
		}
		which verifies (\ref{eq:spectral-equivalence-clamped}) in this case.
		
		Next, if $\vsig \in P_0(\Sigma)$, i.e., $\vsig = P_0 \vsig$, we have from \eqref{A_two_bound} and \eqref{eq:trace_estimate_clamped} that
		\eqns{
			\inner{\A\vsig}{\vsig} = \inner{\A P_0\vsig}{P_0\vsig} \le \left(\frac{1}{2\mu} P_0 \vsig, P_0 \vsig \right)  \le C\left(\inner{\frac{1}{2\mu}P_D\vsig}{P_D\vsig} + \norw{\vecdiv\vsig}{0}^2\right)	
		}
		and from the pointwise orthogonality of $(I - P_D)\vsig = (P_0 - P_D) \vsig$ and $P_D \vsig$ that 
		\eqns{
			\trinner{\vsig}{\vsig}_{\tilde{\Sigma}} = \inner{\frac{1}{2\mu}P_0\vsig}{P_0\vsig} + \norw{\vecdiv\vsig}{0}^2 \ge \inner{\frac{1}{2\mu}P_D\vsig}{P_D\vsig} + \norw{\vecdiv\vsig}{0}^2 .	
		}
		The left inequality of \eqref{eq:spectral-equivalence-clamped} easily follows from the above two inequalities. 
		Furthermore, using \eqref{eq:trace_estimate_clamped} and \eqref{A_two_bound}, we obtain 
		\algns{ 
			\trinner{\vsig}{\vsig}_{\tilde{\Sigma}} &= \left( \frac{1}{2\mu} P_0 \vsig, P_0 \vsig \right) + \norw{\vecdiv \vsig}{0}^2 \\
			&\leq C\left( \inner{\frac{1}{2\mu}P_D\vsig}{P_D\vsig} + \norw{\vecdiv\vsig}{0}^2\right) \\
			&\leq C(\inner{\A\vsig}{\vsig} + \norw{\vecdiv\vsig}{0}^2) 
		} 
	which is the right inequality of \eqref{eq:spectral-equivalence-clamped}.

	\end{proof}
	We recall that the space $\rchi = \Sigma \times Q \times \V \times \Gamma$ was introduced in Section \ref{sec:model_formulation} for the case when $|\Gamma_t| > 0$. For the clamped case, i.e., when $|\Gamma_t| = 0$,
	we consider the modified space given by $\tilde{\rchi} := \tilde{\Sigma} \times \Q \times \V \times \Gamma$, where 
	$\tilde{\Sigma} = H(\vecdiv, \Omega; \mats)$, and  with inner product given by (\ref{eq:auxiliary-stress-innerprod-def}). As a consequence of the spectral equivalences \eqref{eq:hdiv-specequiv-nonclamped} and \eqref{eq:spectral-equivalence-clamped} we obtain that the operator $\CoeffMat$ is bounded 
	as an operator in $\mathcal{L}(\rchi, \dual{\rchi})$ when $|\Gamma_t| > 0$, and as an operator in 
	$\mathcal{L}(\tilde{\rchi}, \dual{\tilde{\rchi}})$ in the clamped case when $|\Gamma_t| = 0$.
	More precisely, we have the following result.

		\begin{theorem}
			\label{thm:continuous_stability}
			Assume that the parameters $\lambda$, $\alpha$, and $\bkappa$ satisfies condition 
			(\ref{eq:parameter-ranges}). Let $X = \rchi$ if $|\Gamma_t| >0$, and $X = \tilrchi$ if $|\Gamma_t| = 0$. Then, for the system (\ref{hdiv_weaksys}) there is a constant $\beta > 0$, independent of $\lambda$, $\alpha$, and $\bkappa$, so that the following inf-sup condition holds:
			\eqn{
				\label{eq:infsup-total-system}
				\adjustlimits{\inf}_{(\sig,\p,\u,\lagmult)\in X}  {\sup}_{(\vsig, \vp,\v,\vlag)\in X}
				\frac{\trinner{\CoeffMat(\sig,\p, \u, \lagmult)}{(\vsig,\vp, \v, \vlag)}}{ \norw{(\sig,\p, \u,\lagmult)}{X}  \norw{(\vsig,\vp, \v,\vlag)}{X}}
				\geq
				\beta .
			}
	\end{theorem}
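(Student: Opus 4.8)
The plan is to exhibit \eqref{hdiv_weaksys} as a classical two-field saddle point problem and to verify the hypotheses of Brezzi's theorem, with all constants controlled uniformly over the parameter range \eqref{eq:parameter-ranges}. Group the unknowns into a \emph{primal} pair $(\sig,\p)\in W$ and a \emph{multiplier} pair $(\u,\lagmult)\in M := \V\times\Gamma$, where $W := \Sigma\times\Q$ if $|\Gamma_t|>0$ and $W := \tilde{\Sigma}\times\Q$ if $|\Gamma_t|=0$. Then the left-hand side of \eqref{hdiv_weaksys} is $a((\sig,\p),(\vsig,\vp)) + b((\vsig,\vp),(\u,\lagmult)) + b((\sig,\p),(\v,\vlag))$, with the \emph{symmetric} bilinear form
\[
a((\sig,\p),(\vsig,\vp)) := \inner{\A\sig}{\vsig} + \inner{\bulk\sig}{\vp} + \inner{\p}{\bulk\vsig} + \inner{\Coperator\p}{\vp} + \inner{\bkappa\grad\p}{\grad\vp}
\]
and the constraint form $b((\vsig,\vp),(\v,\vlag)) := \inner{\vecdiv\vsig}{\v} + \inner{\skw\vsig}{\vlag}$, which does not see the pressure. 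By the standard theory of mixed methods, \eqref{eq:infsup-total-system} follows once one establishes, with constants independent of $\lambda$, $\alpha$, $\bkappa$: (i) boundedness of $a$ on $W\times W$ and of $b$ on $W\times M$; (ii) the inf--sup condition for $b$; and (iii) coercivity of $a$ on $\ker b = \{(\vsig,\vp)\in W : \vecdiv\vsig = 0,\ \skw\vsig = 0\}$.

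The one delicate point, which underlies both (i) and (iii), is a uniform bound on the stress--pressure coupling $\inner{\p}{\bulk\vsig} = \inner{\p}{\tfrac{\alpha}{2\mu+n\lambda}\tr\vsig}$. Applying the Cauchy--Schwarz inequality with the positive weight $\tfrac{\alpha}{2\mu+n\lambda}$, and using the pointwise identity $\inner{\A\vsig}{\vsig} = \inner{\tfrac{1}{2\mu}P_D\vsig}{P_D\vsig} + \inner{\tfrac{1}{2\mu+n\lambda}(I-P_D)\vsig}{(I-P_D)\vsig}$ (established just before \eqref{A_two_bound}), the elementary bound $\tfrac{n\alpha^2}{2\mu+n\lambda}\le\tfrac{\alpha^2}{\lambda}$, and $\Coperator \ge \tfrac{2n\alpha^2}{2\mu+n\lambda}$ (both immediate from \eqref{eq:Coperator-def} with $s_0 = \alpha^2/\lambda$), one gets the key inequality
\[
\inner{\p}{\bulk\vsig}^2 \;\le\; \tfrac12\,\inner{\A\vsig}{\vsig}\,\inner{\Coperator\p}{\p}.
\]
Boundedness of $a$ is then immediate: each term is estimated by Cauchy--Schwarz, using $\inner{\A\vsig}{\vsig}\le\trinner{\vsig}{\vsig}_{\Sigma}$ (resp. $\le\trinner{\vsig}{\vsig}_{\tilde{\Sigma}}$) from Lemma~\ref{lem:trace_estimate} (resp. Lemma~\ref{lem:spectral-equivalence-clamped}) and the definition \eqref{eq:system-innerprods} of $\trinner{\cdot}{\cdot}_{\Q}$. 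Boundedness of $b$ is trivial except for the observation that in the clamped case $\skw\vsig = \skw(P_0\vsig)$, so the $\skw$--$\lagmult$ term equals $\inner{P_0\vsig}{\vlag}$ and is controlled by $\trinner{\vsig}{\vsig}_{\tilde{\Sigma}}$ even though $\norw{(I-P_0)\vsig}{0}$ is not uniformly controlled by it.

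For (iii), the key inequality together with Young's inequality gives, for every $(\vsig,\vp)\in W$, $a((\vsig,\vp),(\vsig,\vp)) \ge (1-\tfrac{1}{\sqrt2})\bigl(\inner{\A\vsig}{\vsig} + \inner{\Coperator\vp}{\vp} + \inner{\bkappa\grad\vp}{\grad\vp}\bigr)$; on $\ker b$ we have $\vecdiv\vsig = 0$, so Lemma~\ref{lem:trace_estimate} (resp. Lemma~\ref{lem:spectral-equivalence-clamped}) gives $\inner{\A\vsig}{\vsig}\gtrsim\trinner{\vsig}{\vsig}_{\Sigma}$ (resp. $\gtrsim\trinner{\vsig}{\vsig}_{\tilde{\Sigma}}$), and with \eqref{eq:system-innerprods} this is exactly $a((\vsig,\vp),(\vsig,\vp))\gtrsim\norw{(\vsig,\vp)}{W}^2$ on $\ker b$. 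For (ii), given $(\v,\vlag)\in M$ invoke the classical bounded right-inverse for mixed elasticity with weakly imposed symmetry (cf. \cite{arnold2007mixed}): there is $\vsig\in\Sigma$ (resp. $\vsig\in H(\vecdiv,\Omega;\mats)$ when $|\Gamma_t|=0$) with $\vecdiv\vsig = \v$, $\skw\vsig = \vlag$, and $\norw{\vsig}{H(\vecdiv,\Omega;\mats)}\le C(\Omega)\bigl(\norw{\v}{0}+\norw{\vlag}{0}\bigr)$; since $\mu$ is of order one and the weights in $\trinner{\cdot}{\cdot}_{\tilde{\Sigma}}$ are at most $\tfrac{1}{2\mu}$, we get $\norw{(\vsig,0)}{W}\lesssim\norw{\v}{0}+\norw{\vlag}{0}$, while $b((\vsig,0),(\v,\vlag)) = \norw{\v}{0}^2+\norw{\vlag}{0}^2$, giving the required inf--sup with a constant depending only on $\Omega$. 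Assembling (i)--(iii) into Brezzi's theorem yields \eqref{eq:infsup-total-system}, with $\beta$ independent of $\lambda$, $\alpha$, $\bkappa$.

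I expect the main obstacle to be the clamped case $|\Gamma_t|=0$. There the constant-trace fields $\vsig = c\,\idmat$ lie in $\ker b$, and for them $\inner{\A\vsig}{\vsig}+\norw{\vecdiv\vsig}{0}^2\to 0$ as $\lambda\to\infty$, so on $\rchi$ neither is $\CoeffMat$ uniformly bounded nor can $a$ be coercive on $\ker b$ uniformly in $\lambda$; this forces the modified norm \eqref{eq:auxiliary-stress-innerprod-def} and Lemma~\ref{lem:spectral-equivalence-clamped}, and the small point reconciling this modification with the weak-symmetry coupling is precisely that $\skw$ annihilates $(I-P_0)\vsig$, so the $\lagmult$-equation only ever sees $P_0\vsig$. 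When $|\Gamma_t|>0$ no modification is needed and the argument runs verbatim with $\trinner{\cdot}{\cdot}_{\Sigma}$ and Lemma~\ref{lem:trace_estimate}. The only substantive idea beyond routine bookkeeping is the weighted Cauchy--Schwarz bound above: it shows the $\sig$--$\p$ coupling is strictly subdominant to the diagonal energies (the constant $\tfrac12$ coming from $s_0\ge\tfrac{n\alpha^2}{2\mu+n\lambda}$), so that $a$ is in fact coercive on all of $W$ modulo the $H(\vecdiv)$-control of the stress furnished by Lemmas~\ref{lem:trace_estimate}--\ref{lem:spectral-equivalence-clamped}.
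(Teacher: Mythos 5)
Your argument is correct, and it reaches the theorem by a genuinely different organization than the paper. The paper proves \eqref{eq:infsup-total-system} directly in Babu\v{s}ka form: for a given $(\sig,\p,\u,\lagmult)$ it builds an explicit test function $\vsig=\sig+\delta_0\tilde{\vsig}$, $\vp=\p$, $\v=-\u+\delta_1\vecdiv\sig$, $\vlag=-\lagmult$ (with $\tilde{\vsig}$ from the elasticity inf--sup), and then tunes parameters $\delta_0,\delta_1,\epsilon_1,\epsilon_2,\epsilon_3$ through a chain of Young inequalities until every coefficient is positive. You instead fold the system into a two-field Brezzi saddle point with primal block $(\sig,\p)$ and multiplier $(\u,\lagmult)$, and verify boundedness, the inf--sup for $b$ (which is exactly the paper's \eqref{eq:cont-stability-proof-elasticity}), and coercivity of $a$ on $\ker b$. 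The analytic content is identical --- both proofs ultimately rest on the elasticity inf--sup, on Lemmas~\ref{lem:trace_estimate} and \ref{lem:spectral-equivalence-clamped}, and on the weighted Cauchy--Schwarz control of the $\bulk$-coupling using $s_0\ge \tfrac{n\alpha^2}{2\mu+n\lambda}$ (your key inequality
\[
\inner{\p}{\bulk\vsig}^2 \le \tfrac12\,\inner{\A\vsig}{\vsig}\,\inner{\Coperator\p}{\p}
\]
is the sharp, packaged form of the paper's estimates \eqref{eq:infsup-proof-step4}--\eqref{eq:infsup-proof-step5}). What your route buys is conceptual clarity: it isolates in one line why the stress--pressure coupling is strictly subdominant to the diagonal energies, it avoids the bookkeeping of five auxiliary constants, and your observation that $\skw$ annihilates $(I-P_0)\vsig$ cleanly explains why the modified norm \eqref{eq:auxiliary-stress-innerprod-def} is compatible with the weak-symmetry constraint in the clamped case. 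What the paper's route buys is explicitness (a computable $\beta$) and a verbatim transfer to the discrete setting of Theorem~\ref{thm:discrete-stability}; note that your argument transfers equally well, since Definition~\ref{def:elasticity_stable} supplies precisely the discrete inf--sup for $b$ and the inclusion $\vecdiv\Sigma_h\subset\V_h$ needed so that the discrete kernel consists of divergence-free stresses, which is all that your coercivity step uses.
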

	\begin{proof}
		Consider first the case with $|\Gamma_t | >0$, so that $X = \rchi$.
		To prove (\ref{eq:infsup-total-system}) we will show that there exist positive constants $C_1$, $C_2$, so that for every $0\neq (\sig, \u, \lagmult, \p) \in \rchi$ there are $(\vsig, \v, \vlag, \vp) \in \rchi$ so that
		\eqn{
			\label{eq:infsup-proof-step1}
			\algnd{
				\norw{(\vsig, \vp, \v, \vlag)}{\rchi} &\leq C_1 \norw{(\sig, \p, \u, \lagmult)}{\rchi} , \\
				\trinner{\CoeffMat(\sig,\p, \u,\lagmult)}{(\vsig,\vp,\v,\vlag)} &\geq C_2 \norw{(\sig, \p, \u, \lagmult)}{\rchi}^2,
			}	
		}
		the key being that $C_1$ and $C_2$ will be independent of $\lambda$, $\alpha$, and $\bkappa$.	
			
		To verify (\ref{eq:infsup-proof-step1}), let $(\sig, \u, \lagmult, \p) \in \rchi$ be nonzero, but otherwise arbitrary. From the theory of mixed elasticity with weakly enforced symmetry there exists a $\beta_0>0$, and $\tilde{\vsig}\in \Sigma$ 
		 so that
		\eqn{
			\label{eq:cont-stability-proof-elasticity}
			\algnd{
				\div\tilde{\vsig} &= \u , \\
				\inner{\tilde{\vsig}}{{\vlag}} &= \inner{\lagmult}{{\vlag}} \quad \forall {\vlag} \in \lagspace , \\
				\norw{\tilde{\vsig}}{\Sigma}^2 &\leq \beta_0^2\left(\norw{\u}{0}^2 + \norw{\lagmult}{0}^2 \right),
			} 
		}
		with $\beta_0$ depending only on $\Omega$. From  \eqref{eq:hdiv-specequiv-nonclamped} we see that
		\eqn{ \label{eq:Atau-bound}
			\inner{\A \tilde{\vsig}}{\tilde{\vsig}} \leq \beta_0^2\left(\norw{\u}{0}^2 + \norw{\lagmult}{0}^2 \right).	
		}
By setting $\vsig = \sig + \delta_0\tilde{\vsig}$, $\v = -\u+\delta_1\vecdiv \sig$, $\vlag = -\lagmult$, and $\vp = \p$, we find that
		\eqns{
			\norw{(\vsig,\vp, \v, \vlag)}{\rchi} \leq \sqrt{2(1+\max(\delta_0^2 \beta_0^2,\delta_1^2))}	\norw{(\sig, \p, \u, \lagmult)}{\rchi},	
		}
		which verifies the first inequality in (\ref{eq:infsup-proof-step1}). To prove the second inequality in (\ref{eq:infsup-proof-step1}), we begin by observing that after cancelling terms we obtain the identity
		\algn{
			\label{eq:infsup-proof-step2}
			\trinner{\CoeffMat(\sig,\p, \u,\lagmult)}{(\vsig,\vp, \v,\vlag)} &=
			\inner{\A \sig}{\sig} + \delta_0\inner{\A\sig}{\tilde{\vsig}} + 2\inner{\p}{\bulk \sig} + \norw{p}{Q}^2\\
			&\quad + \delta_0\inner{\p}{\bulk \tilde{\vsig}} + \delta_0\left(\norw{\u}{0}^2 + \norw{\lagmult}{0}^2\right) + \delta_1\norw{\vecdiv \sig}{0}^2 \nonumber,
		}
		where we have used the properties of $\tilde{\vsig}$. To bound the three cross terms we use Cauchy-Schwarz and Young's inequalities in a standard way. For the term $\inner{\A\sig}{\tilde{\vsig}}$, this and \eqref{eq:Atau-bound} yield
		\eqn{
			\label{eq:infsup-proof-step3}
			\inner{\A\sig}{\tilde{\vsig}} \leq \frac{\epsilon_1}{2}\inner{\A\sig}{\sig} + \frac{1}{2\epsilon_1}\inner{\A\tilde{\vsig}}{\tilde{\vsig}} \le \frac{\epsilon_1}{2}\inner{\A\sig}{\sig} + \frac{\beta_0^2}{2\epsilon_1} (\norw{\u}{0}^2 + \norw{\lagmult}{0}^2) 
		}
		for any $\epsilon_1 > 0$.
	We can derive similar bounds for the two terms involving the operator $\bulk$. From the definition of $\bulk$ and Young's inequality we obtain
	\eqn{
		\label{eq:infsup-proof-step4}
		\inner{\p}{\bulk \sig} \leq \frac{\epsilon_2}{2}\inner{\frac{n\alpha^2}{2\mu + n\lambda}\p}{\p} + \frac{1}{2\epsilon_2}\inner{\frac{1}{2\mu + n\lambda}\tr \sig}{\frac{1}{n}\tr \sig}.
	}
	For the first term in \eqref{eq:infsup-proof-step4}, the definition of $\Coperator$ in \eqref{eq:Coperator-def} yields
	\eqns{
		\inner{\frac{n\alpha^2}{2\mu + n\lambda}\p}{\p} \leq \frac{1}{2}\inner{\Coperator \p}{\p} \le \frac{1}{2}\norw{\p}{\Q}^2.
	}
	Inserting this into \eqref{eq:infsup-proof-step4}, and using the properties of $\A$, we obtain
	\algn{
		\label{eq:infsup-proof-step5}
		\inner{\p}{\bulk \sig} &\leq \frac{\epsilon_2}{4}\norw{\p}{Q}^2 + \frac{1}{2\epsilon_2}\inner{\tr \A \sig}{\frac{1}{n}\tr \sig}  \\
		&= \frac{\epsilon_2}{4}\norw{\p}{Q}^2 + \frac{1}{2\epsilon_2}\inner{\A \sig}{\frac{1}{n}\tr \sig \idmat} 
		\leq \frac{\epsilon_2}{4}\norw{\p}{Q}^2 + \frac{1}{2 \epsilon_2}\inner{\A \sig}{\sig}, \nonumber 
	}
		where $\epsilon_2 > 0$ is arbitrary. Furthermore, we have a similar bound 
		\eqns{
		\inner{\p}{\bulk \tilde{\vsig}} \leq \frac{\epsilon_3}{4}\norw{\p}{Q}^2 + \frac{1}{2 \epsilon_3}\inner{\A \tilde{\vsig}}{\tilde{\vsig}} \le \frac{\epsilon_3}{4}\norw{\p}{Q}^2 + \frac{\beta_0^2}{2 \epsilon_3} (\norw{\u}{0}^2 + \norw{\lagmult}{0}^2).
		}
		As a consequence, after using (\ref{eq:infsup-proof-step3}) and (\ref{eq:infsup-proof-step5}) in (\ref{eq:infsup-proof-step2}) and collecting terms, together with using the properties of $\tilde{\vsig}$, we end up with
		\algns{
			\label{eq:infsup-proof-step6}				\trinner{\CoeffMat(\sig, \p, \u, \lagmult)}{(\vsig,\vp,\v,\vlag)} &\geq \left(1 - \frac{\delta_0\epsilon_1}{2} - \frac{1}{\epsilon_2} \right)\inner{\A \sig}{\sig} + \delta_0\left(1 - \frac{\beta_0^2}{2\epsilon_1} - \frac{\beta_0^2}{2\epsilon_3} \right)\left(\norw{\u}{0}^2 + \norw{\lagmult}{0}^2 \right) \nonumber \\
			&\quad + \left( 1 - \frac{\epsilon_2}{2} - \frac{\delta_0\epsilon_3}{4}\right)\norw{\p}{Q}^2 + \delta_1\norw{\vecdiv \sig}{0}^2.
		}
		If we can choose $\delta_0$, $\epsilon_1$, $\epsilon_2$, and $\epsilon_3$ so that all the coefficients above are positive, this will prove the second inequality  in (\ref{eq:infsup-proof-step1}), because of \eqref{hdiv_weaksys}. For instance, choosing
		$\delta_0 = \frac{1}{6\beta_0^2}$, $\epsilon_1 = \epsilon_3 = 2\beta_0^2$, $\epsilon_2 = \frac{3}{2}$, and $\delta_1 = \frac{1}{6}$ yields
		\algns{
			\trinner{\CoeffMat(\sig,\p,\u,\lagmult)}{(\vsig,\vp,\v,\vlag)} &\geq \frac{C}{6}\norw{\sig}{\Sigma} + \frac{1}{12\beta_0^2}\left(\norw{\u}{0}^2 + \norw{\lagmult}{0}^2 \right) + \frac{1}{6}\norw{\p}{Q}^2,
		}
		in which case the second inequality in (\ref{eq:infsup-proof-step1}) holds with $\beta = \frac{1}{6}\min\left(C, \frac{1}{2\beta_0^2} \right)$.
		
		In the case that $X = \tilrchi$ the argument is almost completely analogous. In particular, (\ref{eq:cont-stability-proof-elasticity}) continues to hold with $\norw{\cdot}{\tilde{\Sigma}}$ instead of $\norw{\cdot}{\Sigma}$ since $\norw{\vsig}{\tilde{\Sigma}} \leq \norw{\vsig}{\Sigma}$ for every $\vsig \in \Sigma$.
		When $X = \tilrchi$ we must also use \eqref{eq:spectral-equivalence-clamped} instead of \eqref{eq:hdiv-specequiv-nonclamped}. Other than that, the argument remains unchanged.
	\end{proof}
	
	\subsection{The discrete case	}
	If we discretize (\ref{eq:coeffmat-tot-system}) with finite element spaces $\Sigma_h \subset \Sigma$, $\Q_h \subset \Q$, $\Gamma_h \subset \Gamma$, and $\V_h \subset \V$, and define $\rchi_h = \Sigma_h \times \Q_h \times \V_h \times \Gamma_h$, the discrete formulation becomes:

	Find $(\sig_h,\p_h, \u_h, \lagmult_h) \in \rchi_h$ so that
	\eqn{
		\label{eq:discrete_weaksys}
		\algnd{
			\inner{\A\sig_h}{\vsig} + \inner{\p_h}{\bulk\vsig} + \inner{\u_h}{\vecdiv \vsig} + \inner{\lagmult_h}{\vsig} &=0 & & \forall\vsig \in \Sigma_{h} , \\
			\inner{\bulk\sig_h}{\vp} + \inner{\Coperator\p_h}{\vp} + \inner{\bkappa\grad\p_h}{\grad\vp} &= \inner{g}{\vp} & &\forall \vp\in \Q_{h} ,\\
			\inner{\vecdiv \sig_h}{\v} &= -\inner{f}{\v} & & \forall \v \in \V_h ,\\
			\inner{\sig_h}{\vlag} &= 0 & & \forall \eta \in \Gamma_h. \\
		}
	}
	Alternatively, the left hand side of the system above can be written on the form  $\trinner{\CoeffMat_h(\sig,\p,\u,\lagmult)}{(\vsig,\vp,\v,\vlag)}$,
	where $\CoeffMat_h: \rchi_h \to \dual{\rchi_h}$ is the corresponding discrete coefficient operator. 
	Our goal is to establish a discrete version of Theorem \ref{thm:continuous_stability}, i.e., a stability bound where the stability constant is 
	independent of the model parameters as well as the mesh parameter $h$. We observe that the key feature of the proof of Theorem \ref{thm:continuous_stability} was the property (\ref{eq:cont-stability-proof-elasticity}), which corresponds to the stability of the underlying 
	elasticity problem.  For the proof to carry over to the discrete case, the finite element spaces should satisfy a discrete variant of 
	property (\ref{eq:cont-stability-proof-elasticity}). In other words, the triple $(\Sigma_h, \V_h,\Gamma_h)$ has to be a stable 
	elasticity element. Therefore, we make the following definition.
	
	\begin{mydef}
		\label{def:elasticity_stable}
		We say the function spaces $\Sigma_h$, $\V_h$, and $\Gamma_h$ are elasticity stable if $\vecdiv \Sigma_h = \V_h$, and there exists a constant $C>0$, independent of discretization parameter $h$, such that for any $(\u_h,\lagmult_h) \in \V_h \times \Gamma_h$, there exists $\vsig \in \Sigma_h$ satisfying
		\eqns{
		\label{eq:elasticity-stable}
		\algnd{
			\vecdiv\vsig &= \u_h , \\
			\inner{\vsig}{\vlag} &= \inner{\lagmult_h}{\vlag} \quad \forall \vlag \in \Gamma_h , \\
			\norw{\vsig}{\div} &\leq C\left(\norw{\u_h}{0} + \norw{\lagmult_h}{0} \right).	
		}
		} 
	\end{mydef}
	Examples of elasticity stable elements can be found in \citep{arnold1984peers,arnold2007mixed,boffi2009reduced,cockburn2010new,fortin1997dualhybrid,gopalakrishnan2012second,lee2016unified,stenberg1988elasticity}.
	
	\begin{theorem}
		\label{thm:discrete-stability}
		Let $X = \rchi$ if $\Gamma_t$ has positive measure, and if $|\Gamma_t| = 0$ let $X = \tilrchi$.
		Suppose that $(\Sigma_h, V_h, \Gamma_h)$ in the discrete formulation (\ref{eq:discrete_weaksys}) is elasticity stable, and that the parameter ranges in (\ref{eq:parameter-ranges}) are satisfied. Setting $\rchi_h = \Sigma_h \times \V_h \times \Gamma_h \times Q_h$, with the same norm as $X$, and defining $\CoeffMat_h: \rchi_h \to \dual{\rchi_h}$, then there exists $\beta > 0$ such that
		\eqns{
			\label{eq:infsup-discrete-system}
			\adjustlimits{\inf}_{(\sig,\p,\u,\lagmult)\in \rchi_h}  {\sup}_{(\vsig,\vp,\v,\vlag)\in \rchi_h}
			\frac{\trinner{\CoeffMat_h(\sig,\p,\u,\lagmult)}{(\vsig,\vp,\v,\vlag)}}{ \norw{(\sig,\p,\u,\lagmult)}{X}  \norw{(\vsig,\vp,\v,\vlag)}{X}}
			\geq
			\beta,
		}
		and $\beta$ is independent of $\lambda$, $\alpha$, $\kappa$, and the discretization parameter $h$.
	\end{theorem}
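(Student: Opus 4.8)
The plan is to reproduce the proof of Theorem~\ref{thm:continuous_stability} essentially verbatim, with the single change that the discrete elasticity stability of Definition~\ref{def:elasticity_stable} replaces the continuous property \eqref{eq:cont-stability-proof-elasticity}. The reason this substitution costs nothing in the constants is that $\Sigma_h \subset \Sigma$, so the two-sided bound \eqref{A_two_bound}, and hence the spectral equivalences \eqref{eq:hdiv-specequiv-nonclamped} of Lemma~\ref{lem:trace_estimate} (when $|\Gamma_t|>0$) and \eqref{eq:spectral-equivalence-clamped} of Lemma~\ref{lem:spectral-equivalence-clamped} (when $|\Gamma_t|=0$), hold for every $\vsig \in \Sigma_h$ with the same $\lambda$- and $h$-independent constants; no discrete counterpart of these lemmas needs to be proved. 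In particular the projections $P_0$, $P_D$ need not map $\Sigma_h$ into itself, since they enter the argument only through the pointwise and integral estimates \eqref{A_two_bound} and \eqref{eq:trace_estimate_clamped}, which are valid on all of $\Sigma$.

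Concretely, I would fix a nonzero $(\sig_h,\p_h,\u_h,\lagmult_h)\in\rchi_h$ and apply Definition~\ref{def:elasticity_stable} to $(\u_h,\lagmult_h)\in\V_h\times\Gamma_h$ to obtain $\tilde{\vsig}\in\Sigma_h$ with $\vecdiv\tilde{\vsig}=\u_h$, $\inner{\tilde{\vsig}}{\vlag}=\inner{\lagmult_h}{\vlag}$ for all $\vlag\in\Gamma_h$, and $\norw{\tilde{\vsig}}{\div}\le C(\norw{\u_h}{0}+\norw{\lagmult_h}{0})$; since $\mu$ is of order one, $\norw{\cdot}{\div}$ is equivalent to $\norw{\cdot}{\Sigma}$ (respectively to $\norw{\cdot}{\tilde{\Sigma}}$, using $\norw{\cdot}{\tilde{\Sigma}}\le\norw{\cdot}{\Sigma}$) with order-one constants, so the bound \eqref{eq:Atau-bound} carries over to this $\tilde{\vsig}$. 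I would then test the discrete system \eqref{eq:discrete_weaksys} with $\vsig=\sig_h+\delta_0\tilde{\vsig}$, $\v=-\u_h+\delta_1\vecdiv\sig_h$, $\vlag=-\lagmult_h$, and $\vp=\p_h$ — all of which lie in the correct discrete spaces precisely because $\vecdiv\Sigma_h=\V_h$. The cancellations that produced \eqref{eq:infsup-proof-step2} occur unchanged, and the Cauchy--Schwarz and Young estimates \eqref{eq:infsup-proof-step3}--\eqref{eq:infsup-proof-step5} go through word for word, using \eqref{eq:trace-A} for the two terms involving $\bulk$ and the discrete form of \eqref{eq:Atau-bound} for the two terms involving $\tilde{\vsig}$. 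The same choice of $\delta_0,\delta_1,\epsilon_1,\epsilon_2,\epsilon_3$ as in the proof of Theorem~\ref{thm:continuous_stability} (now with $\beta_0$ the stability constant $C$ of Definition~\ref{def:elasticity_stable}) makes every coefficient of the collected lower bound positive, giving $\trinner{\CoeffMat_h(\sig_h,\p_h,\u_h,\lagmult_h)}{(\vsig,\vp,\v,\vlag)}\ge C_2\norw{(\sig_h,\p_h,\u_h,\lagmult_h)}{X}^2$, while the triangle inequality together with the three properties of $\tilde{\vsig}$ and $\norw{\vecdiv\sig_h}{0}\le\norw{\sig_h}{X}$ gives $\norw{(\vsig,\vp,\v,\vlag)}{X}\le C_1\norw{(\sig_h,\p_h,\u_h,\lagmult_h)}{X}$. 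Dividing the second inequality by the first yields \eqref{eq:infsup-discrete-system}.

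For the clamped case $X=\tilrchi$ the computation is identical once $\norw{\cdot}{\Sigma}$ is replaced by $\norw{\cdot}{\tilde{\Sigma}}$ throughout and Lemma~\ref{lem:spectral-equivalence-clamped} is used in place of Lemma~\ref{lem:trace_estimate}; the inequality $\norw{\vsig}{\tilde{\Sigma}}\le\norw{\vsig}{\Sigma}$, valid on $\Sigma$ and hence on $\Sigma_h$, ensures that the $\tilde{\vsig}$ produced by Definition~\ref{def:elasticity_stable} meets the required bound in the $\tilde{\Sigma}$-norm as well. I do not anticipate a genuine obstacle here: the entire content of the theorem is the reduction to Definition~\ref{def:elasticity_stable}, and the only point worth stating with care is that the two-sided estimate \eqref{A_two_bound} and the spectral equivalences \eqref{eq:hdiv-specequiv-nonclamped}, \eqref{eq:spectral-equivalence-clamped}, being inequalities over all of $\Sigma$, restrict automatically to the conforming subspace $\Sigma_h$ with unchanged constants, so that nothing new must be established in the discrete setting.
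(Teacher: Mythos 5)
Your proposal is correct and follows essentially the same route as the paper: the paper's proof likewise fixes $(\sig_h,\p_h,\u_h,\lagmult_h)$, invokes the elasticity stability of $(\Sigma_h,\V_h,\Gamma_h)$ to produce $\tilde{\vsig}\in\Sigma_h$, tests with $\vsig=\sig_h+\delta_0\tilde{\vsig}$, $\vp=\p_h$, $\v=-\u_h+\delta_1\vecdiv\sig_h$, $\vlag=-\lagmult_h$ (all admissible since $\vecdiv\Sigma_h=\V_h$), and then declares the remaining estimates identical to those of Theorem~\ref{thm:continuous_stability}. Your additional remarks --- that the spectral equivalences \eqref{eq:hdiv-specequiv-nonclamped} and \eqref{eq:spectral-equivalence-clamped} restrict to the conforming subspace $\Sigma_h$ with unchanged constants, and that $P_0$, $P_D$ need not preserve $\Sigma_h$ --- are accurate and simply make explicit what the paper leaves implicit.
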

	
	\begin{proof}
		Analogous to the proof of Theorem \ref{thm:continuous_stability}, it is sufficient to prove that there exist constants $C_1$ and $C_2$ so that for every $0 \neq (\sig_h, \p_h, \u_h, \lagmult_h) \in \rchi_h$ there is $(\vsig, \vp, \v,\vlag) \in \rchi_h$ so that
			\algns{
			\norw{(\vsig, \vp, \v, \vlag)}{\rchi} &\leq C_1 \norw{(\sig_h, \p_h, \u_h, \lagmult_h)}{\rchi_h}, \\
			\trinner{\CoeffMat_h(\sig_h,\p_h, \u_h,\lagmult_h)}{(\vsig,\vp,\v,\vlag)} &\geq C_2 \norw{(\sig_h, \p_h, \u_h, \lagmult_h)}{\rchi_h}^2.
		}
		Fix $(\sig_h,\p_h,\u_h,\lagmult_h) \in \rchi_h$.
		Since $\Sigma_h$, $\V_h$, and $\Gamma_h$ are elasticity stable, $\vecdiv\sig_h \in \V_h$ and we can choose $\tilde{\vsig} \in \Sigma_h$ such that
		\algns{
			\vecdiv\tilde{\vsig} &= \u_h , \\
			\inner{\tilde{\vsig}}{\vlag} &= \inner{\lagmult_h}{\vlag} \quad \forall \vlag \in \Gamma_h , \\
			\norw{\tilde{\vsig}}{\div} &\leq C\left(\norw{\u_h}{0} + \norw{\lagmult_h}{0} \right),	
		}
		where the constant $C$ is independent of $h$ and model parameters.
		Setting $\vsig = \sig_h + \delta_0\tilde{\vsig}$, $\vp = p_h$, $\v = -\u_h + \delta_1 \vecdiv\sig_h$, and $\vlag = -\lagmult_h$, we have that $(\vsig,\vp,\v,\vlag) \in \rchi_h$ and
		\algns{
			\label{eq:infsup-discrete-proof-step2}
			\trinner{\CoeffMat(\sig_h,\p_h,\u_h,\lagmult_h)}{(\vsig,\vp, \v,\vlag)} &=
			\inner{\A \sig_h}{\sig_h} + \delta_0\inner{\A\sig_h}{\tilde{\vsig}} + 2\inner{\p_h}{\bulk \sig_h} + \norw{\p_h}{Q}^2\\
			& \quad + \delta_0\inner{\p_h}{\bulk \tilde{\vsig}} + \delta_0\left(\norw{\u_h}{0}^2 + \norw{\lagmult_h}{0}^2\right) + \delta_1\norw{\vecdiv \sig_h}{0}^2 \nonumber.
		}
		The rest of the proof is completely analogous to the proof of Theorem \ref{thm:continuous_stability}.
		
	\end{proof}
	

\section{Preconditioning}
\label{sec:preconditioning}

In this section we will derive order optimal parameter-robust preconditioners for the discretized system. In the case where $|\Gamma_t|>0$
it was shown in the previous section  that the continuous operator $\CoeffMat : \rchi \rightarrow \dual{\rchi}$ was an isomporphism, 
where $\rchi= \Sigma \times \Q \times \V \times \Gamma$.
A parameter-robust preconditioner is then  
constructed as an isomporphism  $\B: \dual{\rchi} \rightarrow \rchi$. The canonical choice, which is
symmetric and positive definite, is:       
\eqn{
\label{eq:block-diagonal-preconditioner1}
\B = \inv{\begin{pmatrix}
\left(\frac{1}{2\mu}  - \matgrad\vecdiv\right) & 0 & 0 & 0 \\
0 & \Coperator - \div\bkappa\grad & 0 & 0 \\
0 & 0 & \identity & 0 \\
0 & 0 & 0 & \identity \\		
\end{pmatrix}}	
}
In the discrete case, order optimal and spectrally equivalent realizations of the preconditioner
can be constructed by multigrid techniques. The first block requires $H(\div)$-preconditioners
such as, e.g., \cite{arnold1997preconditioning,hiptmair2007nodal}. The second block is a second order elliptic 
operator for which multilevel algorithms are well known. If $\V_h$ and $\Gamma_h$ are discontinuous finite element spaces, 
the third and fourth blocks are block diagonal mass matrices and their exact inverses, which are cheaply computable, can be used as preconditioners. 
When $\Gamma_h$ is a Lagrange finite element (e.g., \citep{boffi2009reduced,fortin1997dualhybrid}), 
simple iterative methods such as Jacobi or symmetric Gauss-Seidel give 
preconditioners that are spectrally equivalent to the inverse of the mass matrix.    

The case $|\Gamma_t|=0$ is more challenging and we recall that $\CoeffMat$ is no longer stable in 
$\rchi= \Sigma \times \Q \times \V \times \Gamma$. In fact, stability
was obtained in the alternative space $\tilrchi =\tilde{\Sigma} \times \Q \times \V \times \Gamma$.
Therefore, the canonical choice for a parameter-robust preconditioner is then the symmetric and positive definite operator $\tilde{\B}: \dual{\tilrchi} \rightarrow \tilrchi$ defined by
\eqn{
	\label{eq:block-diagonal-preconditioner2}
	\tilde{\B} = \inv{\begin{pmatrix}
		\left(\frac{1}{2\mu}P_0 + \frac{1}{2\mu+n\lambda}(I-P_0) - \matgrad\vecdiv\right) & 0 & 0 & 0 \\
		0 & \Coperator - \div \bkappa \grad & 0 & 0 \\
		0 & 0 & I & 0 \\
		0 & 0 & 0 & I 
	\end{pmatrix}}.
}
Here, $\tilde{\Sigma}$ is not a function space with standard $H(\div)$ norm,
and it is not clear that the multilevel algorithms developed for standard $H(\div)$ spaces
result in efficient preconditioners in $\tilde{\Sigma}$. 
Here, we will therefore use a technique similar to the one used in \cite{lee2015parameter}.
In the rest of this section we assume that $\mu$, $\lambda$ are constant on $\Omega$.
We recall
the  $\tilde{\Sigma}$ inner product       	
\eqns{
\trinner{\sig}{\vsig}_{\tilde{\Sigma}} = \inner{\frac{1}{2\mu}P_0\sig}{P_0\vsig} + \inner{\frac{1}{2\mu+n\lambda}(I-P_0)\sig}{(I-P_0)\vsig} + \inner{\vecdiv\sig}{\vecdiv\vsig}.
}
To construct a preconditioner for this inner product we rely on the fact that we have efficient preconditioners for the 
weighted $H(\vecdiv,\Omega;\mats)$   inner product 
\[ 
\trinner{\sig}{\vsig}_{\Sigma} = \inner{\frac{1}{2\mu}\sig}{\vsig} + \inner{\vecdiv\sig}{\vecdiv\vsig}.
\]

	Let $\{ \phi_i \}_{i=1}^N$ be a basis for $\Sigma_h \subset \Sigma$. Then we introduce the following matrices: 
	\eqn{
	\label{eq:B-matrices-definitions}
	\algnd{
		\PCmat_{i,j} &= \trinner{\phi_j}{\phi_i}_{\tilde{\Sigma}} , \\
		\Bmat_{i,j} &= \trinner{\phi_j}{\phi_i}_{\Sigma} ,\\
		(\BOmat)_{i,j}&= \frac{1}{2\mu}\inner{P_0\phi_j}{P_0\phi_i} + \inner{\vecdiv\phi_j}{\vecdiv\phi_i} ,\\
		(\Btmat)_{i,j} &= \frac{1}{2\mu}\inner{(I-P_0)\phi_j}{(I-P_0)\phi_i}.
	}
	}
	From  \eqref{eq:B-matrices-definitions}, and (\ref{eq:auxiliary-stress-innerprod-def}), we see that
	\eqns{
		\Bmat = \BOmat + \Btmat, \quad \PCmat = \BOmat + \frac{2\mu}{2\mu + n\lambda}\Btmat.
	}
        Hence,
	\eqns{
		\PCmat = \Bmat - \rho \Btmat \qquad \text{ where } \quad \rho = \frac{n\lambda}{2\mu + n\lambda}.
	}
	Considering the entries of $\Btmat$ in more detail we find that
	\algns{
		(\Btmat)_{i,j} &= \frac{1}{2\mu}\frac{1}{n|\Omega|}\left(\int_{\Omega}\tr\phi_j \intd x\right)\left(\int_{\Omega}\tr\phi_i \intd x\right) \\
		&= \frac{1}{2\mu}m m^T,
	}
	where $m\in \reals^N$ is the column vector with entries
	\eqn{
		\label{eq:m-vector-definition}
		m_i = \frac{1}{\sqrt{n|\Omega |}}\int_{\Omega}\tr\phi_i\intd x.
	}
	Thus, we have that
	\eqn{
		\label{eq:PCmat-id1}
		\PCmat = \Bmat - \frac{\rho}{2\mu}m m^T.
	}
	Next, we define $w\in \reals^N$ to be so that
	\eqn{
		\label{eq:w-vector-definition}
		\sum_{i=1}^N w_i\phi_i = \idmat.
	}
	
	\begin{lemma}
	\label{lem:wmB-vector-identities}
		Let $\{\phi_i\}_{i=1}^N$ be a basis for the finite dimensional function space $\Sigma_h \subset \Sigma$, and assume that $\mu$ and $\lambda$ are positive constants.
		With $m \in \reals^N$ defined by (\ref{eq:m-vector-definition}), $w\in \reals^N$ defined by (\ref{eq:w-vector-definition}), and $\Bmat$ the $N \times N$ matrix defined by (\ref{eq:B-matrices-definitions}), the following identities hold:
		\eqn{
			\label{eq:wm-identities}
			\Bmat w = \frac{\sqrt{n|\Omega |}}{2\mu}m, \quad w^T m = \sqrt{n|\Omega |}.
		}
	\end{lemma}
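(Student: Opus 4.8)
The plan is to prove both identities by a direct computation that unwinds the definitions in \eqref{eq:B-matrices-definitions}, \eqref{eq:m-vector-definition}, and \eqref{eq:w-vector-definition}, exploiting the single observation that $\idmat$ is a constant matrix field, so that its row-wise divergence vanishes, $\vecdiv\idmat = 0$.

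For the first identity I would compute $\Bmat w$ entrywise. Writing $(\Bmat w)_i = \sum_{j=1}^N \trinner{\phi_j}{\phi_i}_{\Sigma}\, w_j$ and using bilinearity of the $\Sigma$ inner product to move $\sum_j w_j \phi_j$ inside both slots, this equals $\inner{\frac{1}{2\mu}\sum_{j}w_j\phi_j}{\phi_i} + \inner{\vecdiv\big(\sum_j w_j\phi_j\big)}{\vecdiv\phi_i}$. By \eqref{eq:w-vector-definition} the sum $\sum_j w_j \phi_j$ is exactly $\idmat$, whose divergence vanishes, so the second term disappears and we are left with $\frac{1}{2\mu}\inner{\idmat}{\phi_i}$. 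Since the $L^2(\Omega;\mats)$ inner product of $\idmat$ with $\phi_i$ is $\int_\Omega \tr\phi_i\,\intd x$, and this equals $\sqrt{n|\Omega|}\,m_i$ by \eqref{eq:m-vector-definition}, we conclude $(\Bmat w)_i = \frac{\sqrt{n|\Omega|}}{2\mu} m_i$, which is the first claimed identity.

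The second identity is even shorter: using linearity of the trace and of the integral, $w^T m = \sum_{i=1}^N w_i m_i = \frac{1}{\sqrt{n|\Omega|}} \int_\Omega \tr\big(\sum_i w_i \phi_i\big)\,\intd x = \frac{1}{\sqrt{n|\Omega|}}\int_\Omega \tr\idmat\,\intd x = \frac{n|\Omega|}{\sqrt{n|\Omega|}} = \sqrt{n|\Omega|}$, where we again invoked \eqref{eq:w-vector-definition} and $\tr\idmat = n$.

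I do not expect any genuine difficulty here; the proof is essentially bookkeeping. The only points that warrant a line of care are the convention making $\inner{\idmat}{\phi_i} = \int_\Omega \tr\phi_i\,\intd x$ (the Frobenius inner product of matrices), the fact that $\vecdiv\idmat = 0$ so that the $H(\vecdiv)$-part of $\Bmat$ contributes nothing when applied to $w$, and the implicit hypothesis — needed merely for \eqref{eq:w-vector-definition} to be meaningful — that $\idmat \in \Sigma_h$, which is precisely the situation of interest in the clamped case where $\tilde{\Sigma} = H(\vecdiv,\Omega;\mats)$.
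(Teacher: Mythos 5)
Your proof is correct and follows essentially the same route as the paper's: entrywise evaluation of $\Bmat w$, collapsing $\sum_j w_j\phi_j$ to $\idmat$ so the divergence term vanishes, and identifying $\inner{\idmat}{\phi_i}$ with $\int_\Omega \tr\phi_i\,\intd x = \sqrt{n|\Omega|}\,m_i$. You merely make explicit two points the paper leaves implicit, namely $\vecdiv\idmat = 0$ and the requirement $\idmat\in\Sigma_h$, which is a welcome clarification rather than a deviation.
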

	\begin{proof}
		For the first identity in (\ref{eq:wm-identities}) we use the definition of $m$, $w$ and $\Bmat$ to see that the $i$'th component of $\Bmat w$ is 
		\algns{
			\left(\Bmat w\right)_i = \sum_{j=1}^N\trinner{w_j\phi_j}{\phi_i}_{\Sigma} = \frac{1}{2\mu}\inner{\idmat}{\phi_i} = \frac{\sqrt{n|\Omega|}}{2\mu}m_i.
		}	
		Similary, the second identity of (\ref{eq:wm-identities}) follows by
		\algns{
			w^T m = \frac{1}{\sqrt{n|\Omega|}}\int_{\Omega}\sum_{i=1}^N \tr w_i\phi_i \intd x = \frac{1}{\sqrt{n|\Omega|}}\int_{\Omega}\tr\idmat \intd x = \sqrt{n|\Omega|}.
		}
	\end{proof}
	\begin{cor}
		\label{cor:VBV-identity}
		Under the same assumptions as in Lemma \ref{lem:wmB-vector-identities}, and $\PCmat$ the $N \times N$ matrix defined by (\ref{eq:B-matrices-definitions}), it holds that 
		\eqn{
			\label{eq:VBV-identity}
			\PCmat = \Vmat^T \Bmat \Vmat,		
		}
		where
		\eqn{
			\label{eq:Vmat-definition}
			\Vmat = \idmat + aw m^T,
		}
		with $a = \frac{1}{\sqrt{n|\Omega|}}(-1 + \sqrt{1-\rho})$. Moreover, $\Vmat$ is invertible with inverse given by
		\eqn{
			\label{eq:V_lambda_inverse}
			\inv{\Vmat} = \idmat + bw m^T,
		} 
		where $b = \frac{1}{\sqrt{n|\Omega|}}\cdot \frac{1 - \sqrt{1-\rho} }{\sqrt{1-\rho}}$.
	\end{cor}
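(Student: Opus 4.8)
The plan is to verify the factorization $\PCmat = \Vmat^T \Bmat \Vmat$ by direct expansion, using the two identities from Lemma \ref{lem:wmB-vector-identities} to collapse the resulting terms, and then to confirm invertibility of $\Vmat$ by producing its inverse explicitly. First I would expand
\eqns{
\Vmat^T \Bmat \Vmat = (\idmat + a\, m w^T)\Bmat(\idmat + a\, w m^T) = \Bmat + a\, m w^T \Bmat + a\,\Bmat w m^T + a^2\, m w^T \Bmat w m^T.
}
Now I substitute $\Bmat w = \tfrac{\sqrt{n|\Omega|}}{2\mu} m$ (so $w^T\Bmat = \tfrac{\sqrt{n|\Omega|}}{2\mu} m^T$ by symmetry of $\Bmat$) into the three correction terms. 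The two linear-in-$a$ terms each become $a\,\tfrac{\sqrt{n|\Omega|}}{2\mu}\, m m^T$, and the quadratic term becomes $a^2 \tfrac{\sqrt{n|\Omega|}}{2\mu}\, m (w^T m) m^T = a^2 \tfrac{n|\Omega|}{2\mu}\, m m^T$ using $w^T m = \sqrt{n|\Omega|}$. Collecting,
\eqns{
\Vmat^T \Bmat \Vmat = \Bmat + \frac{1}{2\mu}\left(2a\sqrt{n|\Omega|} + a^2 n|\Omega|\right) m m^T.
}

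Comparing with the target identity $\PCmat = \Bmat - \tfrac{\rho}{2\mu} m m^T$ from \eqref{eq:PCmat-id1}, it remains to check the scalar equation $2a\sqrt{n|\Omega|} + a^2 n|\Omega| = -\rho$. Writing $s := a\sqrt{n|\Omega|} = -1 + \sqrt{1-\rho}$, this reads $s^2 + 2s = -\rho$, i.e. $(s+1)^2 = 1-\rho$, which holds by the very definition of $s$ (and here one uses $\rho = \tfrac{n\lambda}{2\mu+n\lambda} < 1$ so that $\sqrt{1-\rho}$ is real, consistent with the parameter ranges). This establishes \eqref{eq:VBV-identity}.

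For invertibility, I would verify directly that $(\idmat + a\, w m^T)(\idmat + b\, w m^T) = \idmat$. Expanding gives $\idmat + (a+b)\, w m^T + ab\, w (m^T w) m^T = \idmat + (a + b + ab\,\sqrt{n|\Omega|})\, w m^T$, again using $m^T w = w^T m = \sqrt{n|\Omega|}$. So it suffices to check $a + b + ab\sqrt{n|\Omega|} = 0$. With $a\sqrt{n|\Omega|} = -1+\sqrt{1-\rho}$ and $b\sqrt{n|\Omega|} = \tfrac{1-\sqrt{1-\rho}}{\sqrt{1-\rho}}$, multiplying the scalar relation through by $\sqrt{n|\Omega|}$ reduces it to a one-line rational identity in $\sqrt{1-\rho}$, which one confirms by clearing the denominator $\sqrt{1-\rho}$. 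Since $\Vmat^{-1}$ exists, $\Vmat$ is invertible, completing the proof.

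The only genuinely delicate point is bookkeeping: keeping track of which factor of $\sqrt{n|\Omega|}$ comes from $m$ versus $w$, and remembering that $\Bmat$ is symmetric (so that the identity $\Bmat w = \tfrac{\sqrt{n|\Omega|}}{2\mu} m$ also yields $w^T \Bmat = \tfrac{\sqrt{n|\Omega|}}{2\mu} m^T$). There is no conceptual obstacle — everything reduces to the rank-one update algebra of $m m^T$ and the two scalar identities in Lemma \ref{lem:wmB-vector-identities}, together with the defining quadratic $(1 + a\sqrt{n|\Omega|})^2 = 1-\rho$.
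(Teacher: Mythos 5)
Your proof is correct and follows essentially the same route as the paper: expand $\Vmat^T\Bmat\Vmat$ as a rank-one update of $\Bmat$, collapse the cross terms with the two identities of Lemma \ref{lem:wmB-vector-identities}, verify the scalar quadratic $(1+a\sqrt{n|\Omega|})^2 = 1-\rho$, and check invertibility via $a+b+ab\sqrt{n|\Omega|}=0$. The only (harmless) cosmetic differences are that you spell out the symmetry of $\Bmat$ and the reality of $\sqrt{1-\rho}$ explicitly, and you verify the inverse by multiplying on the right rather than the left.
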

	\begin{proof}
		By matrix multiplication and the identities in (\ref{eq:wm-identities}) we get
		\algns{
			\Vmat^T \Bmat \Vmat &= (\Bmat + \frac{a\sqrt{n|\Omega|}}{2\mu}mm^T)(\idmat + awm^T) \\
			&= \Bmat + \frac{1}{2\mu}(2a\sqrt{n|\Omega|}+a^2n|\Omega|)mm^T,	
		}
		and inserting the value of $a$ yields
		\eqns{
			\Vmat^T \Bmat \Vmat = \Bmat - \frac{\rho}{2\mu} mm^T =  \PCmat .
		}
		This proves (\ref{eq:VBV-identity}) and further,  using the second identity in (\ref{eq:wm-identities}) we see that
		\eqns{
			 (\idmat + bw m^T) \Vmat = \idmat + (a + b + ab\sqrt{n|\Omega|})wm^T. 			
		}
		With the given values of $a$ and $b$ the second term vanishes, so (\ref{eq:V_lambda_inverse}) is proved.  
	\end{proof}

	\begin{lemma}
		\label{lem:stress-preconditioner}
		Suppose that $\Dmat$ is a preconditioner for $\Bmat$ with condition number $K(\Dmat\Bmat)$, then
		\eqn{
			\label{eq:sub_corr_precond}
			\PCinv = \inv{\Vmat}\Dmat\Vmat^{-T},
		}
		where $\inv{\Vmat}$ is given by (\ref{eq:V_lambda_inverse}),
		is a preconditioner for $\PCmat$ and $K(\PCinv\PCmat) = K(\Dmat\Bmat)$. 
                In particular, the condition number is independent of $\lambda$. 
	\end{lemma}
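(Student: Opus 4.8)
The plan is to reduce everything to the factorization $\PCmat = \Vmat^T \Bmat \Vmat$ supplied by Corollary \ref{cor:VBV-identity}, after which the statement becomes a one-line algebraic identity combined with the observation that the spectral condition number is invariant under similarity transformations.

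First I would check that $\PCinv$ is a legitimate (symmetric positive definite) preconditioner. Since $\Vmat$ is invertible by Corollary \ref{cor:VBV-identity}, the matrix $\PCinv = \inv{\Vmat}\Dmat\Vmat^{-T}$ is a congruence transform of $\Dmat$; because $\Dmat$ is SPD, so is $\PCinv$. Next, I would substitute the factorization from Corollary \ref{cor:VBV-identity} to compute
\eqn{
\label{eq:lem-precond-similarity}
\PCinv\PCmat = \inv{\Vmat}\Dmat\Vmat^{-T}\,\Vmat^T\Bmat\Vmat = \inv{\Vmat}\left(\Dmat\Bmat\right)\Vmat,
}
so $\PCinv\PCmat$ is similar to $\Dmat\Bmat$ and therefore has exactly the same spectrum. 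Both $\Dmat\Bmat$ and $\PCinv\PCmat$ are products of SPD matrices, hence are self-adjoint with respect to suitable inner products and have positive real eigenvalues; the condition number relevant for Krylov convergence is the ratio of the extreme eigenvalues, which, being determined by the spectrum alone, coincides for the two matrices. This gives $K(\PCinv\PCmat) = K(\Dmat\Bmat)$.

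Finally, I would note that neither $\Bmat$ nor $\Dmat$ depends on $\lambda$ — the entire $\lambda$-dependence of the problem has been absorbed into $\Vmat$, and hence into $\PCmat$ — so the equality $K(\PCinv\PCmat) = K(\Dmat\Bmat)$ immediately yields $\lambda$-independence of the left-hand side. There is no real obstacle here; the only point requiring a word of care is the identification of the condition number $K$ (defined earlier via operator norms) with the spectral ratio $\lambda_{\max}/\lambda_{\min}$ in the symmetric positive definite setting, which is what makes it a similarity invariant and lets \eqref{eq:lem-precond-similarity} close the argument.
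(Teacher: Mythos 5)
Your proposal is correct and follows essentially the same route as the paper: both use the congruence $\PCmat = \Vmat^T\Bmat\Vmat$ from Corollary \ref{cor:VBV-identity} to show that $\PCinv\PCmat = \inv{\Vmat}\Dmat\Bmat\Vmat$ is similar to $\Dmat\Bmat$, so the spectra and hence the condition numbers coincide. The extra remarks you add (symmetric positive definiteness of $\PCinv$ by congruence, and the identification of $K$ with the spectral ratio in the SPD setting) are sound and merely make explicit what the paper leaves implicit.
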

	
	\begin{proof}
		It is seen, using (\ref{eq:VBV-identity}), that
		\algns{
			\PCinv \PCmat &= \inv{\Vmat}\Dmat\Vmat^{-T}\Vmat^T \Bmat \Vmat = \inv{\Vmat}\Dmat \Bmat \Vmat.
		}
		We see from this that $\PCinv \PCmat$ and $\Dmat \Bmat$ are similar matrices, and so their eigenvalues coincide.
	\end{proof}

        Hence, the well-known preconditioners for the  weighted $H(\vecdiv, \Omega; \mats)$  inner product
        $\Bmat$ can be reused such that we obtain a preconditioner spectrally 
        equivalent to $\PCmat$. Furthermore,	the preconditioner $\PCinv$ can then be implemented efficiently by applying $\Vmat^{-T}$,  $\Dmat$, and $\inv{\Vmat}$ sequentially. Note that due to the presence of $wm^T$, $\Vmat^{-T}$ and $\inv{\Vmat}$ are both generally dense matrices. Therefore, the action of $wm^T$ on a vector $v \in \reals^N$ should be implemented as $w(m^Tv)$, i.e., the dot product with $m$ and a scaling of $w$.

	Recall that the stability result of Theorem \ref{thm:continuous_stability} hinges on the spectral equivalences of \eqref{eq:hdiv-specequiv-nonclamped} and \eqref{eq:spectral-equivalence-clamped}, which in turn depend on estimating the trace as given in \eqref{eq:trace-estimate-nonclamped} and \eqref{eq:trace_estimate_clamped}, respectively. Since the parameter-robust stability plays a crucial part in establishing a good preconditioner for the system (\ref{eq:discrete_weaksys}), we will, for the convenience of the reader, include proofs of these inequalities here, even if corresponding arguments can be found in \citep{arnold1984family}.

	\begin{proof}[Proof of \eqref{eq:trace-estimate-nonclamped}]
		Fix $\vsig \in \Sigma$ and recall that $|\Gamma_t| >0$ and $\vsig \cdot \nu = 0$ on $\Gamma_t$. By the pointwise decomposition $\vsig = P_D\vsig + (I-P_D)\vsig$, and the fact that $(I-P_D)\vsig = \frac{1}{n}\tr \vsig \idmat$, it suffices to show that
		\eqns{
			\label{eq:trace_estimate_proof_step1}
			\norw{\tr \vsig}{0}^2 \leq C\left( \inner{\frac{1}{2\mu}P_D\vsig}{P_D\vsig} + \norw{\vecdiv\vsig}{0}^2\right)	
		}
		for some constant $C$ independent of $\vsig$.
		To prove this we use a well-known result for the right inverse of the divergence operator: There exists $\phi \in H^1_{\Gamma_d}(\Omega;\Vfield) := \setof{\varphi \in H^1(\Omega;\Vfield) \sothat \varphi|_{\Gamma_d} = 0}$ such that 
		\eqn{
			\label{eq:stokes_infsup}
			\begin{aligned}
				\div\phi = \tr\vsig, \qquad \norw{\phi}{1} \le C \norw{\tr \vsig}{0}
			\end{aligned}
		}
		with $C>0$ independent of $\vsig$, cf. the Appendix.
		We then have that
		\algns{
			\norm{\tr\vsig}_0^2 &= \inner{\tr\vsig}{\div\phi} = \inner{\tr\vsig\idmat}{\matgrad\phi}.
		}
		Since $\tr\vsig\idmat = n(\vsig - P_D\vsig)$, we get 
		\eqns{
			\norm{\tr\vsig}_0^2 = n\inner{\vsig}{\matgrad \phi} - n\inner{P_D\vsig}{\matgrad \phi}  = -n\inner{\vecdiv\vsig}{\phi}-n\inner{P_D\vsig}{\matgrad \phi},
		}
		where the first term of the final form is a result of integration by parts. Next, we may use Cauchy-Schwarz, which results in
		\algns{
			\norm{\tr\vsig}_0^2 &\leq n\left(\norm{\vecdiv\vsig}_0\norm{\phi}_0 + \norm{P_D\vsig}_0\norm{\grad\phi}_0 \right) \\
			&\leq n\left(\norm{\vecdiv\vsig}_0^2 + \norm{P_D\vsig}_0^2 \right)^{\frac{1}{2}}\norm{\phi}_1 \\
			&\leq C\left(\norm{\vecdiv\vsig}_0^2 + \norm{P_D\vsig}_0^2 \right)^{\frac{1}{2}}\norm{\tr\vsig}_0,
		}
		and so the result follows after dividing by $\norm{\tr\tau}_0$.
	\end{proof}	
	When $|\Gamma_t| = 0$, i.e. $\Gamma_d = \pd \Omega$, \eqref{eq:stokes_infsup} can only hold if $\tr\vsig$ has mean value zero. However, with this constraint, we can prove \eqref{eq:trace_estimate_clamped} with almost the same argument as above.
	\begin{proof}[Proof of \eqref{eq:trace_estimate_clamped}]
		Fix any $\vsig \in \Sigma$. From the decomposition $P_0 \vsig = P_D \vsig + (P_0-P_D) \vsig$, it suffices to prove the estimate for $(P_0 - P_D)\vsig$ component. Denoting the mean value of the trace by
		\eqns{
			\overline{\tr\vsig} := \frac{1}{|\Omega|}\int_{\Omega}\tr\vsig\intd x,
		}
		we have that $(I-P_D)P_0\vsig = (P_0 - P_D)\vsig = \frac{1}{n}(\tr\vsig - \overline{\tr\vsig})\idmat$, and so it is sufficient to show that
		\eqns{
			\norw{\tr\vsig - \overline{\tr\vsig}}{0}^2 \leq C\left(\inner{\frac{1}{2\mu}P_D\vsig}{P_D\vsig} + \norw{\vecdiv\vsig}{0}^2\right).
		}
		Since $\tr \vsig - \overline{\tr\vsig}$ is mean-value zero, there exists $\phi \in \setof{\varphi \in H^1(\Omega;\Vfield) \sothat \varphi|_{\partial \Omega} = 0}$ such that 
		\algns{ 
		\div \phi = \tr \vsig - \overline{\tr\vsig}, \qquad \norw{\phi}{1} \le C \norw{\tr \vsig - \overline{\tr\vsig}}{0}
		}
		with $C >0$ independent of $\vsig$, (cf. \cite[Theorem 5.1]{girault1986finite}).
		The rest of the proof is completely analagous to the proof of \eqref{eq:trace-estimate-nonclamped} above.
	\end{proof}

	\section{Numerical results}
	\label{sec:numerical_experiments}
	In this section we present a series of experiments that demonstrate the performance of the proposed preconditioners.  In all of following numerical experiments $\Omega$ is taken to be the unit square $(0,1)^2$ divided in $N\times N$ squares, where each square is divided in two triangles. The parameters $\alpha$, $\bkappa$, $\mu$, and $\lambda$ are all constants throughout the domain, unless stated otherwise. We let $\mu = \frac{1}{2}$ be fixed but vary  $\alpha$, $\bkappa$, and $\lambda$ in the experiments. Specifically, in Case \ref{case:results-stress-inner-product} we will validate the spectral equivalences \eqref{eq:hdiv-specequiv-nonclamped} and \eqref{eq:spectral-equivalence-clamped} for both fully clamped- and nonclamped boundary conditions. Case \ref{case:results-mixed-elasticity} is concerned with a  linear elasticity system with weakly imposed symmetry under fully clamped conditions as this represent the hardest case.  In Case \ref{case:results-full-biot} the full Biot formulation of \eqref{eq:discrete_weaksys} is preconditioned using a preconditioner based on \eqref{eq:block-diagonal-preconditioner2} and as a final numerical experiment we consider in Case \ref{case:results-variable-kappa} system \eqref{eq:discrete_weaksys} with spatially varying $\bkappa$. The tests are conducted using random right-hand sides and initial guesses. Convergence is reached when the square root of the relative preconditioned residual,  
i.e., $\frac{(B r_k, r_k)}{(B r_0, r_0)}$,  where $r_k$ is the residual at the 
$k$-th iteration and $B$ is the preconditioner, is below a given tolerance. 
 
	\begin{numericalcase}
		\label{case:results-stress-inner-product}
		In the first test case we show the performance of the preconditioners for  the weighted $H(\vecdiv,\Omega; \mats)$ 
		 inner product under nonclamped and clamped conditions. That is,  
		for a given right-hand side $f_h$,  we solve the problem: Find  $\sig_h \in \Sigma_h$ such that  
		\eqn{
		\label{eq:validation-stress-inner-products}
			\inner{\A\sig_h}{\vsig} + \inner{\vecdiv\sig_h}{\vecdiv\vsig} = \inner{f_h}{\vsig} \quad \forall \vsig \in \Sigma_h	.
		}
		We use piecewise linear, row-wise Brezzi-Douglas-Marini (BDM) elements, as described in \citep{arnold2007mixed}. The linear system (\ref{eq:validation-stress-inner-products}) is solved using the preconditioned conjugate gradient method where the choice of preconditioner depends on the boundary conditions. In the case of $|\Gamma_t| > 0$, we use a geometric multigrid procedure with a domain decomposition smoother, c.f.~\citep{arnold1997preconditioning}. Subsequently, this preconditioner will be referred to as the AFW preconditioner. When $|\Gamma_t| = 0$, we construct a preconditioner using (\ref{eq:sub_corr_precond}) and the AFW preconditioner for $\Dmat$. 		The results can be viewed in Table \ref{tab:results_stress_inner_products} where we see that 
the number of iterations remains bounded as $N$ and $\lambda$ vary under both clamped and non-clamped boundary conditions. 
	\end{numericalcase}
	\begin{table}[t]
	\centering
	\setlength\tabcolsep{4pt}
	\begin{minipage}{0.48\textwidth}
		\centering
		\begin{tabular}{l | l l l l l }
		\diagbox{$\lambda$}{$N$} & 4& 8& 16& 32& 64\\
		\hline
		$10^{-4}$ & $3$& $2$& $2$& $2$& $2$\\ 
		$10^{-2}$ & $3$& $2$& $2$& $2$& $2$\\ 
		$10^{0}$ & $6$& $6$& $5$& $5$& $4$\\ 
		$10^{2}$ & $13$& $12$& $11$& $9$& $8$\\ 
		$10^{4}$ & $13$& $13$& $11$& $10$& $8$\\ 
		$10^{6}$ & $13$& $12$& $11$& $9$& $8$\\ 
		$10^{8}$ & $12$& $12$& $11$& $10$& $7$\\ 
		$10^{10}$ & $13$& $13$& $12$& $10$& $8$\\ 
		$10^{12}$ & $12$& $13$& $11$& $10$& $8$\\ 
		\end{tabular}
		\subcaption{$|\Gamma_t| > 0$.}
		\label{tab:res_stress_w_bc_pc}
	\end{minipage}
	\hfill
	\begin{minipage}{0.48\textwidth}
		\centering
		\begin{tabular}{l | l l l l l }
		\diagbox{$\lambda$}{$N$} & 4& 8& 16& 32& 64\\
		\hline
		$10^{-4}$ & $3$& $2$& $2$& $2$& $2$\\ 
		$10^{-2}$ & $3$& $3$& $2$& $2$& $2$\\ 
		$10^{0}$ & $6$& $6$& $5$& $5$& $4$\\ 
		$10^{2}$ & $11$& $11$& $10$& $8$& $7$\\ 
		$10^{4}$ & $9$& $10$& $10$& $8$& $7$\\ 
		$10^{6}$ & $9$& $8$& $9$& $8$& $7$\\ 
		$10^{8}$ & $7$& $7$& $7$& $7$& $7$\\ 
		$10^{10}$ & $7$& $7$& $7$& $7$& $6$\\ 
		$10^{12}$ & $7$& $7$& $8$& $8$& $3$\\ 
	\end{tabular}
	\subcaption{$|\Gamma_t| = 0$.}
	\label{tab:res_stress_innerprod_sc_pc}
	\end{minipage}
	\caption{Number of iterations for solving  (\ref{eq:validation-stress-inner-products})
 using preconditioned conjugate gradient method with error tolerance $10^{-9}$.}
	\label{tab:results_stress_inner_products}
	\end{table}
	
	\begin{numericalcase}
		\label{case:results-mixed-elasticity}
		Before testing the preconditioner on the full Biot system, we present some numerical tests on the reduced system of linear elasticity with weakly enforced symmetry. In our notation, this system takes the following form:
		
		For a given $f_h$, find
		$(\sig_h, \u_h, \lagmult_h)\in \Sigma_h \times \V_h \times \Gamma_h$ so that
		\subeqns{
			\label{eq:validation-mixed-elasticity}
			\algn{
				\inner{\A\sig_h}{\vsig} + \inner{\u_h}{\vecdiv\vsig} + \inner{\lagmult_h}{\vsig} &= 0 & & \forall \vsig \in \Sigma_h, \\
				\inner{\vecdiv\sig_h}{\v} &= -\inner{f_h}{\v} & & \forall \v \in \V_h, \\
				\inner{\sig_h}{\vlag} &= 0 & & \forall \vlag \in \Gamma_h .
			}
		}
		For discretization, we can use any of the stable elements for mixed elasticity with weakly enforced symmetry, see e.g., \citep{arnold2007mixed}.
		In particular, in these numerical experiments we use the same piecewise linear BDM elements for $\Sigma_h$ as in Case \ref{case:results-stress-inner-product}, and piecewise constants for $\V_h$ and $\Gamma_h$.
		Additionally, we only consider fully clamped conditions in this case.
		The system (\ref{eq:validation-mixed-elasticity}) is stable in the inner products in (\ref{eq:auxiliary-stress-innerprod-def}) for $\Sigma_h$, $\V_h$, and $\Gamma_h$, respectively. For preconditioning of the $\Sigma_h$-block we again use (\ref{eq:sub_corr_precond}) together with the AFW preconditioner for $\Dmat$, and for the $\V_h$- and $\Gamma_h$ blocks we use the inverse of the diagonal elements of the corresponding mass matrices. 
		The numerical results can be seen in Table \ref{tab:res_elast_sc_pc}. Here, $N$ denotes the size of the total system. Again the number of iterations remains bounded both as $N$ and $\lambda$ increase.
	\end{numericalcase}
	\begin{table}[t]
		\begin{tabular}{l | l l l l l }
			\diagbox{$\lambda$}{$N$} & 4& 8& 16& 32& 64\\
			\hline
			$10^{-4}$ & $18$& $19$& $19$& $19$& $19$\\ 
			$10^{-2}$ & $18$& $19$& $19$& $19$& $19$\\ 
			$10^{0}$ & $28$& $28$& $28$& $28$& $28$\\ 
			$10^{2}$ & $38$& $41$& $40$& $41$& $42$\\ 
			$10^{4}$ & $35$& $36$& $40$& $41$& $43$\\ 
			$10^{6}$ & $28$& $31$& $36$& $40$& $43$\\ 
			$10^{8}$ & $22$& $24$& $31$& $38$& $38$\\ 
			$10^{10}$ & $20$& $21$& $24$& $35$& $28$\\ 
		\end{tabular}
		\caption{Numerical result for mixed elasticity with weakly enforced symmetry. Table shows number of preconditioned minimal residual iterations until reaching error tolerance $10^{-9}$.}
		\label{tab:res_elast_sc_pc}
	\end{table}
	
	\begin{numericalcase}
		\label{case:results-full-biot}
		Considering the full Biot system with weakly imposed symmetry (\ref{eq:discrete_weaksys}) with fully clamped conditions, we discretize $\Sigma_h$, $\V_h$, and $\Gamma_h$ using the same function spaces as in Case \ref{case:results-mixed-elasticity}, and $\Q_h$ is the space of piecewise continuous linear functions over the triangulation of $\Omega$. The boundary conditions for the pressure are homogeneous Neumann conditions, i.e., $|\Gamma_p| = 0$, and to remove the singularity we fix the value of the pore pressure at a single point. The triple $(\Sigma_h, \V_h, \Gamma_h)$ is elasticity stable, which ensures the stability of Theorem \ref{thm:discrete-stability}, and consequently we can use a preconditioner based on (\ref{eq:block-diagonal-preconditioner2}). The actual preconditioner is then constructed using geometrical multigrid with Jacobi smoother replacing the second block of (\ref{eq:block-diagonal-preconditioner2}) for the pore pressure, while the remaining blocks are treated as in Case \ref{case:results-mixed-elasticity}. The results can be seen in Table \ref{tab:biot_test}, where we see that robustness in $N$ and $\lambda$ continue to hold as well as for $\bkappa$ and $\alpha$. 
	\end{numericalcase}
	\begin{table}[t]
		\begin{tabular}{l| l |l || l l l l }
			\multicolumn{3}{c||}{ } &\multicolumn{4}{c}{$N$}\\ 
			$\bkappa$ & $\alpha$ & $\lambda$ & 4& 8& 16& 32\\ 
			\hline 
			\multirow{6}{*}{$10^{0}$} &\multirow{3}{*}{$10^{0}$} &$10^{0}$ & $18$& $22$& $25$& $43$\\ 
			& & $10^{4}$ & $28$& $31$& $33$& $28$\\ 
			& & $10^{8}$ & $28$& $31$& $35$& $22$\\ 
			\cline{3-7}& \multirow{3}{*}{$10^{-4}$} &$10^{0}$ & $21$& $24$& $22$& $27$\\ 
			& & $10^{4}$ & $28$& $31$& $37$& $23$\\ 
			& & $10^{8}$ & $27$& $31$& $30$& $26$\\ 
			\cline{3-7}\cline{2-3} 
			\multirow{6}{*}{$10^{-4}$} &\multirow{3}{*}{$10^{0}$} &$10^{0}$ & $21$& $19$& $18$& $14$\\ 
			& & $10^{4}$ & $27$& $24$& $19$& $16$\\ 
			& & $10^{8}$ & $26$& $24$& $19$& $16$\\ 
			\cline{3-7}& \multirow{3}{*}{$10^{-4}$} &$10^{0}$ & $18$& $16$& $15$& $12$\\ 
			& & $10^{4}$ & $27$& $24$& $19$& $16$\\ 
			& & $10^{8}$ & $27$& $24$& $19$& $16$\\ 
			\cline{3-7}\cline{2-3} 
			\multirow{6}{*}{$10^{-8}$} &\multirow{3}{*}{$10^{0}$} &$10^{0}$ & $21$& $19$& $18$& $14$\\ 
			& & $10^{4}$ & $25$& $24$& $19$& $16$\\ 
			& & $10^{8}$ & $26$& $24$& $19$& $16$\\ 
			\cline{3-7}& \multirow{3}{*}{$10^{-4}$} &$10^{0}$ & $18$& $18$& $15$& $12$\\ 
			& & $10^{4}$ & $25$& $24$& $21$& $16$\\ 
			& & $10^{8}$ & $25$& $24$& $19$& $16$\\ 
			\cline{3-7}\cline{2-3} 
			\hline 
		\end{tabular}
		\caption{Numerical results for preconditioning (\ref{eq:discrete_weaksys}). Table shows number of preconditioned minimal residual iterations until reaching error tolerance $10^{-9}$.}
		\label{tab:biot_test}
	\end{table}
	
	\begin{numericalcase}
		\label{case:results-variable-kappa}
		As the final experiment we again consider (\ref{eq:discrete_weaksys}), but now with hydraulic conductivity $\bkappa = \kappa\idmat$, where $\kappa$ is variable in $\Omega$ and defined by
		\eqn{
		\label{eq:variable-kappa}
			\kappa(x,y) = \begin{cases}
				\kappa,& \textit{ if } y \in (1/4,3/4) \\
				1, & \textit{ otherwise.}
			\end{cases}	
		}
		The results can be seen in Table \ref{tab:var_kappa_test}, where we again see robustness in all parameters.
	\end{numericalcase}

	\begin{table}[t]
		\begin{tabular}{l| l |l || l l l l }
			\multicolumn{3}{c||}{ } &\multicolumn{4}{c}{$N$}\\ 
			$\kappa$ & $\alpha$ & $\lambda$ & 4& 8& 16& 32\\ 
			\hline 
			\multirow{6}{*}{$10^{0}$} &\multirow{3}{*}{$10^{0}$} &$10^{0}$ & $20$& $22$& $30$& $23$\\ 
			& & $10^{4}$ & $26$& $31$& $22$& $29$\\ 
			& & $10^{8}$ & $27$& $31$& $36$& $28$\\ 
			\cline{3-7}& \multirow{3}{*}{$10^{-4}$} &$10^{0}$ & $21$& $24$& $36$& $27$\\ 
			& & $10^{4}$ & $28$& $31$& $33$& $28$\\ 
			& & $10^{8}$ & $27$& $30$& $35$& $19$\\ 
			\cline{3-7}\cline{2-3} 
			\multirow{6}{*}{$10^{-4}$} &\multirow{3}{*}{$10^{0}$} &$10^{0}$ & $21$& $25$& $30$& $28$\\ 
			& & $10^{4}$ & $27$& $31$& $33$& $26$\\ 
			& & $10^{8}$ & $26$& $31$& $32$& $40$\\ 
			\cline{3-7}& \multirow{3}{*}{$10^{-4}$} &$10^{0}$ & $21$& $23$& $29$& $28$\\ 
			& & $10^{4}$ & $27$& $31$& $34$& $33$\\ 
			& & $10^{8}$ & $27$& $31$& $34$& $34$\\ 
			\cline{3-7}\cline{2-3} 
			\multirow{6}{*}{$10^{-8}$} &\multirow{3}{*}{$10^{0}$} &$10^{0}$ & $22$& $25$& $30$& $26$\\ 
			& & $10^{4}$ & $26$& $31$& $34$& $18$\\ 
			& & $10^{8}$ & $26$& $31$& $34$& $28$\\ 
			\cline{3-7}& \multirow{3}{*}{$10^{-4}$} &$10^{0}$ & $21$& $23$& $29$& $21$\\ 
			& & $10^{4}$ & $26$& $31$& $34$& $42$\\ 
			& & $10^{8}$ & $28$& $31$& $34$& $26$\\ 
			\cline{3-7}\cline{2-3}  
			\hline 
		\end{tabular}
		\caption{Numerical results for system \eqref{eq:discrete_weaksys} with variable $\bkappa$ according to \eqref{eq:variable-kappa} using preconditioner based on \eqref{eq:block-diagonal-preconditioner2}. Table shows number of preconditioned minimal residual iterations until reaching error tolerance $10^{-9}$.}
		\label{tab:var_kappa_test}
	\end{table}
	\section{Conclusions:}
	\label{sec:conclusions}
	We have proposed a new variational formulation of Biot's consolidation model based on stress, displacement, and pressure, where the symmetry of the stress is imposed weakly. The formulation is robustly bounded and stable in a set of parameter-dependent norms. This motivates two preconditioners of the system, depending on the type of boundary conditions considered.
	We also show that the parameter-robust stability continues to hold when the elasticity part is discretized with finite element spaces based on mixed linear elasticity with weakly imposed symmetry, leaving a lot of freedom in the choice of discretization of the pressure.
	
	The theoretical results in this work are backed up by a number of numerical experiments, showing robustness in a wide range of values for the
	shear- and
	bulk elastic moduli, hydraulic conductivity, as well as time- and space discretization parameters.
	\begin{appendix}
	\section{A right inverse of the divergence operator}
	A construction of a right inverse of the divergence operator, as expressed by \eqref{eq:stokes_infsup}, is closely related to the inf-sup condition for the Stokes problem, and therefore well-known. However, we are not aware of a proper reference for the case when $|\pd \Omega| > |\Gamma_t| > 0$, i.e. for the case when $|\Gamma_d| > 0$, but $\Gamma_d$ is not all of $\pd \Omega$. Therefore, for completeness, we include a proof here.
	\begin{lemma}
		\label{lem:right-inverse-divergence-nonclamped}
		Assume $|\Gamma_t| > 0$ and set $H^1_{\Gamma_d}(\Omega; \Vfield) = \setof{\phi \in H^1(\Omega; \Vfield) \sothat \phi |_{\Gamma_d} = 0 }$. Then there is a constant $C > 0$ so that for every $f \in L^2(\Omega)$ there is a $\phi \in H^1_{\Gamma_d}(\Omega; \Vfield)$ so that
		\eqns{
			\div \phi = f, \quad \norw{\phi}{1} \leq C\norw{f}{0}.
		}
	\end{lemma}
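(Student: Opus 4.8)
The plan is to reduce the claim to the classical, fully clamped construction — a bounded right inverse of the divergence on $H^1_0(\Omega;\Vfield)$ applied to mean-value-zero data, as in \cite[Theorem~5.1]{girault1986finite} — and then to absorb the (possibly nonzero) mean of $f$ by means of a single fixed auxiliary vector field, which is available precisely because $|\Gamma_t| > 0$.

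First I would fix, once and for all, a smooth field $\psi \in H^1_{\Gamma_d}(\Omega;\Vfield)$ whose support meets $\pd\Omega$ only inside $\Gamma_t$ and whose total flux $F := \int_{\Gamma_t}\psi\cdot\normal\intd s = \int_\Omega \div\psi \intd x$ is nonzero. Such a $\psi$ exists because $\Gamma_t$ has positive measure: one prescribes a nonnegative, nontrivial normal trace on a small relatively open piece of $\Gamma_t$ whose closure avoids $\Gamma_d$, and extends it into $\Omega$ keeping it zero near $\Gamma_d$. Since $\div(\tfrac{|\Omega|}{F}\psi) - 1$ has mean value zero, \cite[Theorem~5.1]{girault1986finite} yields $\phi_0 \in H^1_0(\Omega;\Vfield)$ with $\div\phi_0 = \div(\tfrac{|\Omega|}{F}\psi) - 1$ and $\norw{\phi_0}{1}$ bounded by a constant depending only on $\Omega$ and $\psi$. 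Then $\psi_1 := \tfrac{|\Omega|}{F}\psi - \phi_0$ is a fixed element of $H^1_{\Gamma_d}(\Omega;\Vfield)$ with $\div\psi_1 \equiv 1$ and $\norw{\psi_1}{1} =: C_\psi < \infty$.

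Next, given $f \in L^2(\Omega)$, I would set $\bar f := |\Omega|^{-1}\int_\Omega f \intd x$ and apply \cite[Theorem~5.1]{girault1986finite} to the mean-value-zero function $f - \bar f$, obtaining $\phi_1 \in H^1_0(\Omega;\Vfield) \subset H^1_{\Gamma_d}(\Omega;\Vfield)$ with $\div\phi_1 = f - \bar f$ and $\norw{\phi_1}{1} \le C\norw{f - \bar f}{0} \le C\norw{f}{0}$. Then $\phi := \phi_1 + \bar f\,\psi_1 \in H^1_{\Gamma_d}(\Omega;\Vfield)$ satisfies $\div\phi = (f - \bar f) + \bar f = f$, and, using $|\bar f| \le |\Omega|^{-1/2}\norw{f}{0}$, one obtains $\norw{\phi}{1} \le \norw{\phi_1}{1} + |\bar f|\,C_\psi \le C\norw{f}{0}$ with $C$ depending only on $\Omega$ and the partition of $\pd\Omega$, which is the assertion.

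I expect the only genuinely delicate step to be the construction of $\psi$: one needs a nonzero flux through $\Gamma_t$ realized by a field that vanishes on $\Gamma_d$, which tacitly relies on $\Gamma_d$ and $\Gamma_t$ being separated away from a lower-dimensional interface, as holds for the polygonal or polyhedral partitions used in practice; the remaining estimates are routine bookkeeping. A more functional-analytic alternative would be to establish the associated inf-sup condition $\inf_q \sup_\phi \inner{\div\phi}{q}/(\norw{\phi}{1}\norw{q}{0}) \ge \beta > 0$ directly from a closed-range argument for $\div$ on $H^1_{\Gamma_d}(\Omega;\Vfield)$, together with the observation that the only $q \in L^2(\Omega)$ annihilating $\div H^1_{\Gamma_d}(\Omega;\Vfield)$ is $q = 0$ — since $\grad q = 0$ forces $q$ to be constant and testing against $\psi$ then forces $q = 0$ — but that route merely relocates the same work into the closed-range estimate.
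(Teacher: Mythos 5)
Your proof is correct and follows the same overall strategy as the paper's: split $f$ into its mean-value-zero part, which is handled by the classical right inverse of $\div$ on $H^1_0(\Omega;\Vfield)$ from \cite[Theorem 5.1]{girault1986finite}, and its constant mean part, which is absorbed by a fixed auxiliary field in $H^1_{\Gamma_d}(\Omega;\Vfield)$ carrying nonzero net flux through $\Gamma_t$. The only substantive difference is how that auxiliary field is produced. You build it explicitly, by lifting a prescribed normal trace supported in the interior of $\Gamma_t$ and then correcting by an $H^1_0$ field so that the divergence is identically $1$; as you note, this tacitly assumes $\Gamma_t$ and $\Gamma_d$ can be separated away from their interface, which holds in the settings of interest. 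The paper instead obtains the field variationally: it decomposes $H^1_{\Gamma_d}(\Omega;\Vfield) = H^1_0(\Omega;\Vfield)\oplus V_1$ orthogonally in the $\matgrad$ inner product, solves $\inner{\matgrad\zeta}{\matgrad\psi}=\inner{\idmat}{\matgrad\psi}$ for $\zeta\in V_1$ by Lax--Milgram, and uses $\int_\Omega\div\zeta\,\intd x=\norw{\matgrad\zeta}{0}^2$ to normalize the flux. The variational route avoids any explicit trace lifting, but it silently requires $\zeta\neq 0$ in order to divide by $\norw{\matgrad\zeta}{0}^2$, and $\zeta\neq 0$ is precisely the assertion that some field vanishing on $\Gamma_d$ has nonzero net flux through $\Gamma_t$ --- the same geometric fact your explicit construction supplies. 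So the two arguments are essentially equivalent in content; yours makes the one genuinely delicate ingredient visible, while the paper's packages it inside a Lax--Milgram problem. Your remaining estimates ($\norw{f-\bar f}{0}\le\norw{f}{0}$, $|\bar f|\le |\Omega|^{-1/2}\norw{f}{0}$, and $\div\psi_1\equiv 1$) are all correct.
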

	\begin{proof}
		Take any $f \in L^2(\Omega)$. We first decompose $f$ into its mean value zero- and mean value part as $f = f_0 + f_c$ where $f_0 \in L^2_0(\Omega)$ and $f_c = a_f 1_{\Omega}$ for $a_f \in \reals$. Further, we can decompose $H^1_{\Gamma_d}(\Omega; \Vfield) = H^1_{0}(\Omega;\Vfield) \oplus V_1$, where
		\eqns{
			V_1 := \setof{\phi \in H^1_{\Gamma_d}(\Omega; \Vfield) \sothat \inner{\matgrad\phi}{\matgrad\psi} = 0,\, \forall\psi\in H^1_0(\Omega;\Vfield)}.
		}
		Consider then the problem of finding $\zeta \in V_1$ so that
		\eqn{
			\label{eq:appendix-infsup-stokes-step1}
			\inner{\matgrad\zeta}{\matgrad\psi} = \inner{\idmat}{\matgrad\psi},\, \forall\psi\in V_1.	
		}
		By the Lax-Milgram lemma (cf. e.g., \citep[Theorem 4.1.6]{boffi2013mixed}) problem \eqref{eq:appendix-infsup-stokes-step1} has a unique solution $\zeta$ and $\norw{\zeta}{1} \le C_1$ for some constant $C_1>0$ depending on $\Omega$. Taking $\psi = \zeta$ in \eqref{eq:appendix-infsup-stokes-step1} we obtain
		\eqns{
			\int_{\Omega} \div\zeta \intd x = \norw{\matgrad\zeta}{0}^2.	
		}
		Therefore, if we set $\omega = \frac{a_f}{\norw{\matgrad\zeta}{0}^2}\zeta$ we have $\int_{\Omega}\div\omega\intd x = a_f$
		and $\norw{\omega}{1} \le C\norw{f_c}{0}$ for some constant $C$ depending on $\zeta$.
		It follows that $f - \div\omega \in L^2_0(\Omega)$, i.e., $f-\div\omega$ has mean value zero. From the theory of Stokes equation, we can thus find a $\omega_0 \in H^1_0(\Omega;\Vfield)$ so that 
		\eqn{
			\label{eq:appendix-infsup-stokes-step2}
			\div \omega_0 = f - \div\omega,\quad \norw{\omega_0}{1} \leq C_2\norw{f-\div\omega}{0},
		}
		where the constant $C_2$ is independent of $f-\div\omega$ (cf. \cite[Theorem 5.1]{girault1986finite}). We set $\phi = \omega_0 + \omega$, and it follows from \eqref{eq:appendix-infsup-stokes-step2} that $\div\phi = f$. Using the triangle inequality, \eqref{eq:appendix-infsup-stokes-step2} and the properties of $\omega$ we estimate $\norw{\phi}{1}$ as
		\algns{
		\norw{\phi}{1} &\le \norw{\omega_0}{1} + \norw{\omega}{1} \le C(\norw{f-\div\omega}{0} + \norw{f_c}{0}) \le C(\norw{f}{0} + \norw{\omega}{1}) \leq C\norw{f}{0},  	
		}
		which completes the proof.
	\end{proof}
	\end{appendix}
	\newpage
	\clearpage
	\bibliographystyle{abbrv}
	\vspace{.125in}
	\bibliography{biot_preconditioning.bib}

\begin{thebibliography}{10}

\bibitem{arnold1984peers}
D.~N. Arnold, F.~Brezzi, and J.~Douglas, Jr.
\newblock P{EERS}: a new mixed finite element for plane elasticity.
\newblock {\em Japan J. Appl. Math.}, 1(2):347--367, 1984.

\bibitem{arnold1984family}
D.~N. Arnold, J.~Douglas, Jr., and C.~P. Gupta.
\newblock A family of higher order mixed finite element methods for plane
  elasticity.
\newblock {\em Numer. Math.}, 45(1):1--22, 1984.

\bibitem{arnold1997preconditioning}
D.~N. Arnold, R.~S. Falk, and R.~Winther.
\newblock Preconditioning in {$H({\rm div})$} and applications.
\newblock {\em Math. Comp.}, 66(219):957--984, 1997.

\bibitem{arnold2007mixed}
D.~N. Arnold, R.~S. Falk, and R.~Winther.
\newblock Mixed finite element methods for linear elasticity with weakly
  imposed symmetry.
\newblock {\em Math. Comp.}, 76(260):1699--1723, 2007.

\bibitem{axelsson2012stable}
O.~Axelsson, R.~Blaheta, and P.~Byczanski.
\newblock Stable discretization of poroelasticity problems and efficient
  preconditioners for arising saddle point type matrices.
\newblock {\em Comput. Vis. Sci.}, 15(4):191--207, 2012.

\bibitem{berger2015stabilized}
L.~Berger, R.~Bordas, D.~Kay, and S.~Tavener.
\newblock Stabilized lowest-order finite element approximation for linear
  three-field poroelasticity.
\newblock {\em SIAM J. Sci. Comput.}, 37(5):A2222--A2245, 2015.

\bibitem{boffi2009reduced}
D.~Boffi, F.~Brezzi, and M.~Fortin.
\newblock Reduced symmetry elements in linear elasticity.
\newblock {\em Commun. Pure Appl. Anal.}, 8(1):95--121, 2009.

\bibitem{boffi2013mixed}
D.~Boffi, F.~Brezzi, and M.~Fortin.
\newblock {\em Mixed finite element methods and applications}, volume~44 of
  {\em Springer Series in Computational Mathematics}.
\newblock Springer, Heidelberg, 2013.

\bibitem{bramble1993multigrid}
J.~H. Bramble.
\newblock {\em Multigrid methods}, volume 294 of {\em Pitman Research Notes in
  Mathematics Series}.
\newblock Longman Scientific \& Technical, Harlow; copublished in the United
  States with John Wiley \& Sons, Inc., New York, 1993.

\bibitem{chen2013analysis}
Y.~Chen, Y.~Luo, and M.~Feng.
\newblock Analysis of a discontinuous {G}alerkin method for the {B}iot's
  consolidation problem.
\newblock {\em Appl. Math. Comput.}, 219(17):9043--9056, 2013.

\bibitem{cockburn2010new}
B.~Cockburn, J.~Gopalakrishnan, and J.~Guzm\'an.
\newblock A new elasticity element made for enforcing weak stress symmetry.
\newblock {\em Math. Comp.}, 79(271):1331--1349, 2010.

\bibitem{coussy2004poromechanics}
O.~Coussy.
\newblock {\em Poromechanics}.
\newblock John Wiley \& Sons, 2004.

\bibitem{fortin1997dualhybrid}
M.~Farhloul and M.~Fortin.
\newblock Dual hybrid methods for the elasticity and the {S}tokes problems: a
  unified approach.
\newblock {\em Numer. Math.}, 76(4):419--440, 1997.

\bibitem{girault1986finite}
V.~Girault and P.-A. Raviart.
\newblock {\em Finite element methods for {N}avier-{S}tokes equations},
  volume~5 of {\em Springer Series in Computational Mathematics}.
\newblock Springer-Verlag, Berlin, 1986.
\newblock Theory and algorithms.

\bibitem{gopalakrishnan2012second}
J.~Gopalakrishnan and J.~Guzm\'an.
\newblock A second elasticity element using the matrix bubble.
\newblock {\em IMA J. Numer. Anal.}, 32(1):352--372, 2012.

\bibitem{haga2012parallel}
J.~B. Haga, H.~Osnes, and H.~P. Langtangen.
\newblock A parallel block preconditioner for large-scale poroelasticity with
  highly heterogeneous material parameters.
\newblock {\em Comput. Geosci.}, 16(3):723--734, 2012.

\bibitem{hiptmair2007nodal}
R.~Hiptmair and J.~Xu.
\newblock Nodal auxiliary space preconditioning in {${\bf H}({\bf curl})$} and
  {${\bf H}({\rm div})$} spaces.
\newblock {\em SIAM J. Numer. Anal.}, 45(6):2483--2509, 2007.

\bibitem{lee2016robust}
J.~J. Lee.
\newblock Robust error analysis of coupled mixed methods for {B}iot's
  consolidation model.
\newblock {\em SIAM J. Sci. Comput.}, 69(2):610--632, 2016.

\bibitem{lee2016unified}
J.~J. Lee.
\newblock Towards a unified analysis of mixed methods for elasticity with
  weakly symmetric stress.
\newblock {\em Adv. Comput. Math.}, 42(2):361--376, 2016.

\bibitem{lee2015parameter}
J.~J. Lee, K.-A. Mardal, and R.~Winther.
\newblock Parameter-robust discretization and preconditioning of {B}iot's
  consolidation model.
\newblock {\em SIAM J. Sci. Comput.}, 39(1):A1--A24, 2017.

\bibitem{mardal2011preconditioning}
K.-A. Mardal and R.~Winther.
\newblock Preconditioning discretizations of systems of partial differential
  equations.
\newblock {\em Numer. Linear Algebra Appl.}, 18(1):1--40, 2011.

\bibitem{murad1992improved}
M.~A. Murad and A.~F.~D. Loula.
\newblock Improved accuracy in finite element analysis of {B}iot's
  consolidation problem.
\newblock {\em Comput. Methods Appl. Mech. Engrg.}, 95(3):359--382, 1992.

\bibitem{murad1994stability}
M.~A. Murad and A.~F.~D. Loula.
\newblock On stability and convergence of finite element approximations of
  {B}iot's consolidation problem.
\newblock {\em Internat. J. Numer. Methods Engrg.}, 37(4):645--667, 1994.

\bibitem{phillips2008coupling}
P.~J. Phillips and M.~F. Wheeler.
\newblock A coupling of mixed and discontinuous {G}alerkin finite-element
  methods for poroelasticity.
\newblock {\em Comput. Geosci.}

\bibitem{phillips2007coupling}
P.~J. Phillips and M.~F. Wheeler.
\newblock A coupling of mixed and continuous {G}alerkin finite element methods
  for poroelasticity {I}: the continuous in time case.
\newblock {\em Comput. Geosci.}, 11(2):131--144, 2007.

\bibitem{phillips2007coupling2}
P.~J. Phillips and M.~F. Wheeler.
\newblock A coupling of mixed and continuous {G}alerkin finite element methods
  for poroelasticity {II}: the discrete-in-time case.
\newblock {\em Comput. Geosci.}, 11(2):145--158, 2007.

\bibitem{reed1984investigation}
M.~B. Reed.
\newblock An investigation of numerical errors in the analysis of consolidation
  by finite elements.
\newblock {\em Internat. J. Numer. Analyt. Methods Geomech.}, 8(3):243--257,
  1984.

\bibitem{rhebergen2015threefield}
S.~Rhebergen, G.~N. Wells, A.~J. Wathen, and R.~F. Katz.
\newblock Three-field block preconditioners for models of coupled magma/mantle
  dynamics.
\newblock {\em SIAM J. Sci. Comput.}, 37(5):A2270--A2294, 2015.

\bibitem{showalter2000diffusion}
R.~E. Showalter.
\newblock Diffusion in poro-elastic media.
\newblock {\em J. Math. Anal. Appl.}, 251(1):310--340, 2000.

\bibitem{smith2007interstitial}
J.~H. Smith and J.~A. Humphrey.
\newblock Interstitial transport and transvascular fluid exchange during
  infusion into brain and tumor tissue.
\newblock {\em Microvasc. Res.}, 73(1):58--73, 2007.

\bibitem{stenberg1988elasticity}
R.~Stenberg.
\newblock A family of mixed finite elements for the elasticity problem.
\newblock {\em Numer. Math.}, 53(5):513--538, 1988.

\bibitem{stoverud2016poro}
K.~H. Støverud, M.~Alnæs, H.~P. Langtangen, V.~Haughton, and K.-A. Mardal.
\newblock Poro-elastic modeling of syringomyelia – a systematic study of the
  effects of pia mater, central canal, median fissure, white and gray matter on
  pressure wave propagation and fluid movement within the cervical spinal cord.
\newblock {\em Comput. Methods Biomech. Biomed. Engin.}, 19(6):686--698, 2016.
\newblock PMID: 26176823.

\bibitem{vermeer1981accuracy}
P.~A. Vermeer and A.~Verruijt.
\newblock An accuracy condition for consolidation by finite elements.
\newblock {\em Internat. J. Numer. Analyt. Methods Geomech.}, 5(1):1--14, 1981.

\bibitem{wang2000theory}
H.~F. Wang.
\newblock Theory of linear poroelasticity.
\newblock {\em Princeton Series in Geophysics, Princeton University Press,
  Princeton, NJ}, 2000.

\bibitem{yi2013coupling}
S.-Y. Yi.
\newblock A coupling of nonconforming and mixed finite element methods for
  {B}iot's consolidation model.
\newblock {\em Numer. Methods Partial Differ. Equ.}, 29(5):1749--1777, 2013.

\bibitem{yi2014convergence}
S.-Y. Yi.
\newblock Convergence analysis of a new mixed finite element method for
  {B}iot's consolidation model.
\newblock {\em Numer. Methods Partial Differ. Equ.}, 30(4):1189--1210, 2014.

\bibitem{zienkiewicz1984dynamic}
O.~C. Zienkiewicz and T.~Shiomi.
\newblock Dynamic behaviour of saturated porous media; the generalized {B}iot
  formulation and its numerical solution.
\newblock {\em Internat. J. Numer. Analyt. Methods Geomech.}, 8(1):71--96,
  1984.

\end{thebibliography}

\end{document}